\providecommand{\U}[1]{\protect\rule{.1in}{.1in}}
\newtheorem{theorem}{Theorem}
\newtheorem{corollary}{Corollary}[theorem]
\newtheorem{preremark}{Remark}
\newtheorem{preexample}{Example}
\newenvironment{remark}{\begin{preremark}\rm}{\end{preremark}}
\newenvironment{example}{\begin{preexample}\rm}{\end{preexample}}
\newenvironment{proof}[1][Proof]{\noindent\textbf{#1.} }{$\square$ }
\begin{document}

\title{\textbf{ON THE DETERMINATION OF EXACT NUMBER OF LIMIT CYCLES IN LIENARD
SYSTEMS}}
\author{Aniruddha Palit$^{\ast}$ and Dhurjati Prasad Datta$^{\dagger}$\\$^{\ast}$Department of Mathematics, Surya Sen Mahavidyalaya,\\Siliguri, India, Pin - 734004. \\Email: mail2apalit@gmail.com\\$^{\dagger}$Department of Mathematics, University of North Bengal,\\Siliguri, India, Pin - 734013.\\Email: dp\_datta@yahoo.com}
\date{}
\maketitle

\begin{abstract}
We present a simpler proof of the existence of an exact number of one or more
limit cycles to the Lienard system $\dot{x}=y-F\left(  x\right)  $, $\dot
{y}=-g\left(  x\right)  $, under weaker conditions on the odd functions
$F\left(  x\right)  $ and $g\left(  x\right)  $ as compared to those available
in literature. We also give improved estimates of amplitudes of the limit
cycle of the Van Der Pol equation for various values of the nonlinearity
parameter. Moreover, the amplitude is shown to be independent of the
asymptotic nature of $F$ as $\left\vert x\right\vert \rightarrow\infty$.

\bigskip

\end{abstract}

\textit{Key words and ph\textit{r}ases:}\textbf{ }Autonomous system, Lienard
equation, Limit cycle.

\bigskip

\textit{2010 Mathematics Subject Classification:}\textbf{ }34C07, 70K05

\section{Introduction}

There has been a considerable interest in the study of the number and nature
of limit cycles in a Lienard equation%
\begin{equation}
\ddot{x}+f\left(  x\right)  \dot{x}+g\left(  x\right)  =0 \label{Lienard Eq}%
\end{equation}
recently $\left[  \text{\cite{Zheng Zuo-Huan}}-\text{\cite{Llibre Ponce
Torres}}\right]  $. Limit cycles are isolated periodic curves in the phase
plane and arise in numerous applications as self-sustained oscillations which
exist even in the absence of external periodic forcing. The equation $\left(
\ref{Lienard Eq}\right)  $ is usually studied as an autonomous system, called
the Lienard system, given by%
\begin{equation}
\dot{x}=y-F\left(  x\right)  ,\qquad\dot{y}=-g\left(  x\right)
\label{Lienard System}%
\end{equation}
where $F\left(  x\right)  =%
{\displaystyle\int\limits_{0}^{x}}
f\left(  u\right)  du$. The phase plane defined by $\left(
\ref{Lienard System}\right)  $ is called the Lienard plane. Lienard gave a
criterion for the uniqueness of periodic cycles for a general class of
equations when $F\left(  x\right)  $ is an odd function and satisfies a
monotonicity condition as $x\rightarrow\infty$. An interesting problem for the
system $\left(  \ref{Lienard System}\right)  $ is the determination of the
number of limit cycles for a given odd degree $\left(  m\right)  $ polynomial
$F\left(  x\right)  $. Lins, Pugh and de Melo $\cite{LPM}$ conjectured that
the system $\left(  \ref{Lienard System}\right)  $ has at most $N$ limit
cycles if $m=2N+1$ or $m=2N+2$. Currently this problem is being investigated
by many authors in connection with the still unsolved Hilbert's 16th problem.

Giacomini and Neukirch $\cite{Giacomini Neukirch2}$ have developed a general
procedure for constructing a sequence of polynomials whose roots of odd
multiplicity are related to the number and location of the limit cycles of
equation $\left(  \ref{Lienard Eq}\right)  $ when $f\left(  x\right)  $ is an
even degree polynomial. They have also given a sequence of algebraic
approximations to the equation of each such cycles, although their method is
mainly of experimental (numerical) in nature and a rigorous justification is
still lacking. Holst and Sundberg $\cite{Holst Sunburg}$ have extended
Rychkov's theorem $\cite{Rychkov}$ for a class of $F\left(  x\right)  $ having
$5th$ degree polynomial like behaviour. The proof of Richkov's theorem however
requires \textit{bifurcation theory}. Odani gave a proof on the existence of
exactly $N$ limit cycles of the Lienard equation $\left(  \ref{Lienard Eq}%
\right)  $ with $g\left(  x\right)  =x$. His proof does not make use of the
bifurcation theory. His method also gave an improved estimate of the amplitude
of a limit cycle. Recently there have been some progress in elucidating
sufficient conditions extending the previous results. Chen and Chen
$\cite{Chen Chen}$, for instance, proved the Lins-Pugh-de Melo conjecture for
Lienard system with function $F$ odd. On the other hand, it has been shown
$\cite{Dumortier Panazzolo Roussarie}$ that for suitable polynomial $F$ of
degree $7$, the system $\left(  \ref{Lienard System}\right)  $ has $4$ limit
cycles, contradicting the conjecture in $\cite{LPM}$. Chen, Llibre and Zhang
$\cite{Chen Llibre Zhang}$ proved a sufficient condition for existence of
exactly $N$ limit cycles for the system $\left(  \ref{Lienard System}\right)
$ with a general class of $F\left(  x\right)  $ functions. We investigate an
equivalent problem covering, however, a different class of functions $F$ as
compared to $\cite{Chen Llibre Zhang}$.

In the study on the number of limit cycles several authors have studied the
equation $\left(  \ref{Lienard Eq}\right)  $ in the usual phase plane $($viz.
Theorem $7.10-7.12$, Chapter $4$ in $\cite{Zhing Tongren Wenzao})$ whereas
some considered the Lienard plane $\cite{Chen Llibre Zhang}$. In Theorem
$7.10$, Chapter $4$ $\cite{Zhing Tongren Wenzao}$ the function $f$ is taken as
a periodic function. Theorem $7.11$ is a generalization of Theorem $7.10$ in
which the function $F^{\prime}\left(  x\right)  =f\left(  x\right)  $ is a
monotone function in certain regions. However Theorem $\ref{New Theorem}$ and
Theorem $\ref{New Theorem N}$ in the present paper do not depend upon the
monotonicity of $f$. Rather, we have used the monotonicity of $F$. As a
consequence, merely the sign of the function $f$ determines the monotonic
nature of $F$, and hence determines the number of limit cycles in Lienard
system $\left(  \ref{Lienard System}\right)  $. Thus our results cover a
different class of functions than those covered by Theorems $7.10$ and $7.11$
mentioned above. Theorem $7.12$, Chapter $4$ in $\cite{Zhing Tongren Wenzao}$
and the theorem in $\cite{Chen Llibre Zhang}$ have been proved on Lienard
plane. Both of these results have assumed the existence of $\beta_{j}%
\in\left[  a_{j},a_{j+1}\right]  $, $j=2,3,4,\ldots$ such that $F\left(
\beta_{j}\right)  =F\left(  L_{j-1}\right)  $ where, $a_{j}$'s are positive
roots of $F$ and $L_{j}$'s are unique extremum of $F$ in $\left[
a_{j},a_{j+1}\right]  $ for $j=1,2,3,\ldots$. However, if we do not get any
such $\beta_{j}$ then these results are not applicable. In such situations
Theorem $\ref{New Theorem N}$ in Section $\ref{New Theorem Section}$ of the
present paper is still applicable to determine the exact number of limit
cycles. One such example is given in section $\ref{Example}$.

In this paper we first give a simple but, nevertheless, an important extension
of the Lienard's theorem for the unique limit cycle by removing the unbounded
nature of the function $F$ as $x\rightarrow\infty$. Next, in Theorem
$\ref{New Theorem}$\ we prove that the system $\left(  \ref{Lienard System}%
\right)  $ has exactly two limit cycles when the odd function $F\left(
x\right)  $ undergoes two sign changes in $x>0$ and is monotonic not only as
$x\rightarrow\infty$, but also near $($actually at the right of$)$ the first
zero. However, $g\left(  x\right)  $ $(g\left(  x\right)  >0$ for $x>0)$ can
be any odd continuous function. Example $\ref{Ex Compare 2 Cycle}$ in support
of Theorem $\ref{New Theorem}$ reveals clearly the strength of this theorem
over analogous results $($e.g. Theorem $5.1)$ of $\cite{Zhing Tongren Wenzao}%
$. The \textit{new insights} gained from Theorem $\ref{New Theorem}$ $($and
also from Theorem $\ref{Lienard Theorem Extension})$ then provide a general
approach in obtaining an existence theorem for multiple limit cycles in a
systematic manner. In Theorem $\ref{New Theorem N}$, we state a set of such
conditions for the existence of exactly $N$ limit cycles. Although we are
dealing with odd functions $F$ only, there are certain odd functions as shown
in Example $\ref{Ex 3 Limit Cycle}$, which satisfy the conditions of Theorem
$\ref{New Theorem N}$ in the current paper but do not satisfy the theorem of
$\cite{Chen Llibre Zhang}$. This establishes our claim that the present
theorems cover different classes of functions $F$ than those covered in
$\cite{Zhing Tongren Wenzao}$ and $\cite{Chen Llibre Zhang}$. Moreover, as
stated above, $g\left(  x\right)  $ here is an odd function while for the
theorem of $\cite{Chen Llibre Zhang}$ $g\left(  x\right)  =x$. The second
important result that we find in section $\ref{Observations}$ is an efficient
upper estimate of the amplitude of the limit cycle for the system $\left(
\ref{Lienard System}\right)  $. The values of the amplitudes for the Van der
Pol equation are obtained in Example $\ref{Ex Compare Amplitude}$, which are
much more accurate compared to those in $\cite{Odani}$ and $\cite{Lopez}$.

The paper is organized as follows. In section $\ref{Lienard's Theorem Section}%
$, we sketch the main steps of the proof of the classical Lienard theorem thus
introducing our notations. In section $\ref{Observations}$, we discuss some
special observations leading to an extension of the classical Lienard theorem.
Our main result, Theorem $\ref{New Theorem}$, on the existence of two limit
cycles is proved in section $\ref{New Theorem Section}$. In Theorem
$\ref{New Theorem N}$ we state the sufficient conditions for existence of
exactly $N$ limit cycles. An outline of the proof is given in Appendix $($For
a detailed proof, see $\cite{Palit Datta 2})$. The proof of this general
existence theorem is based on an induction method with \textit{non-trivial
initial hypotheses} for $N=1$ and $N=2$. We present some examples in section
$\ref{Example}$ highlighting the key features of the above theorems. Section
$\ref{Current Status}$ contains some concluding remarks.

\section{Lienard's Theorem\label{Lienard's Theorem Section}}

Here we present an outline of the Lienard's Theorem for the sake of
completeness. This helps us introducing necessary notations which will be used subsequently.

\begin{theorem}
\label{Lienard Theorem}The equation $\left(  \ref{Lienard Eq}\right)  $ has a
unique periodic solution if\newline%
\begin{tabular}
[c]{rl}%
$\left(  i\right)  $ & $f$ and $g$ are continuous;\\
$\left(  ii\right)  $ & $F$ and $g\left(  x\right)  $ are odd functions with
$g\left(  x\right)  >0$ for $x>0$;\\
$\left(  iii\right)  $ & $F$ is zero only at $x=0$, $x=a$, $x=-a$ for some
$a>0;$\\
$\left(  iv\right)  $ & $F\left(  x\right)  \rightarrow\infty$ as
$x\rightarrow\infty$ monotonically for $x>a$.
\end{tabular}

\end{theorem}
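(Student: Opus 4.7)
The plan is to work in the Liénard plane and exploit the odd symmetry of $F$ and $g$. Since the vector field of (\ref{Lienard System}) is invariant under $(x,y,t)\mapsto(-x,-y,-t)$, any orbit that is symmetric about the origin is automatically closed. I would therefore reduce the search for a periodic solution to finding a trajectory that leaves the positive $y$-axis at $P_0=(0,y_0)$ and returns to the negative $y$-axis at $P_1=(0,-y_1)$ with $y_1=y_0$.

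To measure the failure of symmetry, I would introduce the energy $V(x,y)=\tfrac12 y^2+G(x)$ with $G(x)=\int_0^x g(u)\,du$, and compute along the flow that $\dot V=-F(x)g(x)$. Denoting by $\gamma(y_0)$ the arc from $P_0$ to $P_1$, set
\[
\Phi(y_0)=V(P_1)-V(P_0)=\tfrac12(y_1^2-y_0^2)=-\int_{\gamma(y_0)}F(x)g(x)\,dt.
\]
A periodic orbit corresponds exactly to a zero of $\Phi$, so the theorem reduces to showing that $\Phi$ has a unique positive zero.

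For the sign and existence, I would note that when $y_0$ is small enough that $\gamma(y_0)$ stays inside the vertical strip $|x|\leq a$, hypotheses (ii)--(iii) force $F(x)g(x)\le 0$ on the arc, so $\Phi(y_0)>0$. For uniqueness and the eventual change of sign, I would split an orbit that crosses $x=a$ into three sub-arcs: a left arc $\gamma_1$ on $0\le x\le a$ with $\dot x>0$, a middle arc $\gamma_2$ on $x\ge a$, and a right arc $\gamma_3$ on $0\le x\le a$ with $\dot x<0$. On $\gamma_1,\gamma_3$ one changes variables to $x$ via $dt=dx/(y-F(x))$, and on $\gamma_2$ one uses $dt=-dy/g(x)$, obtaining an explicit decomposition of $\Phi(y_0)$ into three integrals.

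The main obstacle will be the strict monotonicity of $\Phi$ in $y_0$ once $\gamma(y_0)$ reaches $x=a$. The standard comparison argument is to take $y_0<y_0'$: on the side arcs the upper orbit lies above the lower one, so the positive side contributions decrease with $y_0$; on the middle arc, where hypothesis (iv) gives $F(x)>0$ and $F$ increasing, the upper orbit stretches further into the region where $F$ is larger and therefore loses strictly more energy. Together these comparisons yield $\Phi(y_0')<\Phi(y_0)$, and hypothesis (iv), $F(x)\to\infty$, forces $\Phi(y_0)\to-\infty$. Continuity of $\Phi$ and the intermediate value theorem then produce the unique zero, giving the unique periodic orbit. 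The technical care required lies in handling the middle arc when $F(x)$ and $y$ can be comparable, and in justifying that the two trajectories $\gamma(y_0)$ and $\gamma(y_0')$ remain ordered throughout the decomposition.
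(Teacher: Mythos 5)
Your proposal is correct and follows essentially the same route as the paper: the quantity $\Phi(y_0)=-\int_\gamma Fg\,dt$ is exactly the paper's $V_{YQY'}=\int_{YQY'}F\,dy$ (the paper parametrizes by the point $Q$ on $y=F(x)$ rather than by $y_0$, which is equivalent), and your three-arc decomposition with positivity inside $0\le x\le a$, monotone decrease via the ordering of trajectories, divergence to $-\infty$ from hypothesis $(iv)$, and the intermediate value theorem reproduces steps $(A)$--$(F)$ of the paper's sketch. No substantive difference.
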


\begin{proof}
[A Brief Sketch of the Proof]The general shape of the path can be obtained
from the following observations.

\begin{enumerate}
\item[$\left(  a\right)  $] Because of the symmetry of the system$\left(
\ref{Lienard Eq}\right)  $ under $\left(  x,y\right)  \rightarrow\left(
-x,-y\right)  $ any periodic orbit is symmetric about the origin.

\item[$\left(  b\right)  $] The slope of a phase path is given by%
\begin{equation}
\frac{dy}{dx}=\frac{-g\left(  x\right)  }{y-F\left(  x\right)  }\text{.}
\label{Gradient of Lienard System}%
\end{equation}
Thus, a phase path is horizontal if $\dfrac{dy}{dx}=0$, i.e. if $g\left(
x\right)  =0$, i.e. if $x=0$ $\left(  \text{by }\left(  ii\right)  \text{
above}\right)  $. Similarly, a phase path is vertical on the curve $y=F\left(
x\right)  $. Above the curve $y=F\left(  x\right)  $ we have $\dot{x}>0$ and
below $\dot{x}<0$. Moreover, $\dot{y}<0$ for $x>0$ and $\dot{y}>0$ for $x<0$.
\end{enumerate}

\begin{figure}[h]
\begin{center}
\includegraphics[height=6cm]{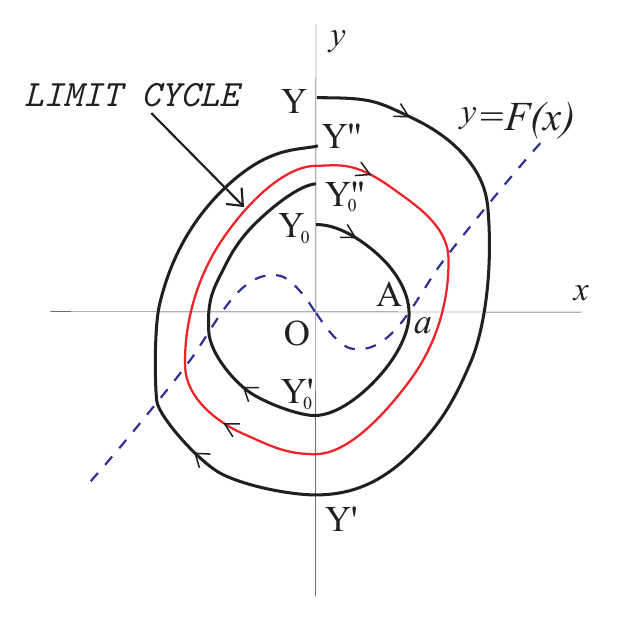}
\end{center}
\caption{{}Orbits of the Lienard System $\left(  \ref{Lienard System}\right)
$.}%
\label{Lienard Limit Cycle}%
\end{figure}

A path $YY^{\prime}Y^{\prime\prime}$ $($Figure \ref{Lienard Limit Cycle}$)$ is
closed iff $Y$ and $Y^{\prime\prime}$ coincide, which means by symmetry
$\left(  a\right)  $%
\begin{equation}
OY=OY^{\prime}\text{.} \label{Closed Path Condition}%
\end{equation}
This is equivalent to%
\begin{equation}
V_{YQY^{\prime}}=0 \label{Potential Zero}%
\end{equation}
where for a typical path $YQY^{\prime}$ in Figure \ref{Typical Path}%
\begin{equation}
V_{YQY^{\prime}}=v_{Y^{\prime}}-v_{Y}=%
{\displaystyle\int\limits_{YQY^{\prime}}}
dv=%
{\displaystyle\int\limits_{YQY^{\prime}}}
Fdy\text{.} \label{Potential Integral in v}%
\end{equation}
and%
\begin{equation}
v\left(  x,y\right)  =%
{\displaystyle\int\limits_{0}^{x}}
g\left(  u\right)  du+\frac{1}{2}y^{2}\text{.} \label{Potential Function}%
\end{equation}
\begin{figure}[h]
\begin{center}
\includegraphics[height=9cm]{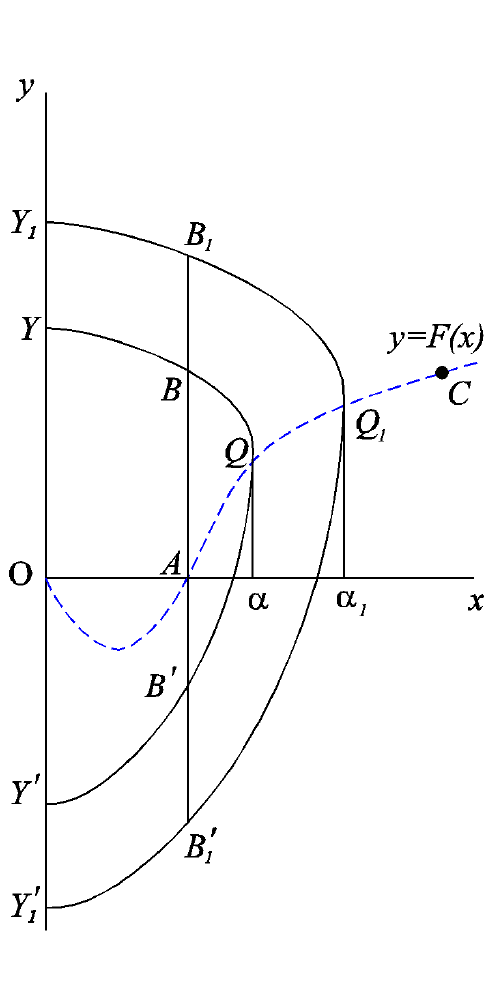}
\end{center}
\caption{{}Typical paths for the Lienard Theorem}%
\label{Typical Path}%
\end{figure}

Writing%
\[
V_{YQY^{\prime}}=V_{YB}+V_{BQB^{\prime}}+V_{B^{\prime}Y^{\prime}}%
\]
where $BB^{\prime}$ is a line parallel to the $y-$ axis and passing through
the point $\left(  0,a\right)  $ when the function $F$ changes its sign from
negative to positive, one then proves that

\begin{enumerate}
\item[$\left(  A\right)  $] As $Q$ moves out of the point $A\left(
0,a\right)  $ along the curve $AC$, the potentials $V_{YB}+V_{B^{\prime
}Y^{\prime}}$ is positive and monotone decreasing.

\item[$\left(  B\right)  $] As $Q$ moves out of the point $A\left(
0,a\right)  $ along the curve $AC$, $V_{BQB^{\prime}}$ is monotone decreasing.

\item[$\left(  C\right)  $] From $\left(  A\right)  $ and $\left(  B\right)  $
it follows that $V_{YQY^{\prime}}$ is monotone decreasing to the right of the
point $A$, $($Figure \ref{Typical Path}$)$.

\item[$\left(  D\right)  $] The quantity $V_{BQB^{\prime}}$ tends to $-\infty$
as the paths moves away to infinity.

\item[$\left(  E\right)  $] From $\left(  C\right)  $ and $\left(  D\right)
$, it follows that the quantity $V_{YQY^{\prime}}$ is monotone decreasing to
$-\infty$, at the right of the point $A$ $($Figure \ref{Typical Path}$)$.

\item[$\left(  F\right)  $] $V_{YQY^{\prime}}>0$ when the point $Q$ is at $A$
or to the left of the point $A$.
\end{enumerate}

It thus follows from $\left(  E\right)  $ and $\left(  F\right)  $ that
$V_{YQY^{\prime}}$ is monotone decreasing continuous function which changes
its sign from positive to negative as the point $Q$ moves out of $A\left(
a,0\right)  $ along the curve. As a result, $V_{YQY^{\prime}}$ will vanish
once and only once. Thus, there is one and only one closed path and the proof
is complete.
\end{proof}

\begin{remark}
\label{Simple Limit Cycle}The unique limit cycle in the above theorem is
\textbf{simple} in the sense that no (differentiable) perturbation satisfying
the conditions $\left(  i\right)  $-$\left(  iv\right)  $ can bifurcate the
limit cycle into two or more number of limit cycles.
\end{remark}

\begin{remark}
The condition $\left(  E\right)  $ enables us to conclude that once
$V_{YQY^{\prime}}$ becomes negative, it can never be positive as $Q$ moves to
infinity through the curve of $F\left(  x\right)  $. This observation helps us
to deduce the existence of a unique limit cycle. However, we see that the
existence of the limit cycle is indeed ensured only if $V_{YQY^{\prime}}$
becomes negative from positive i.e., if there is a change in sign of
$V_{YQY^{\prime}}$. Further, the unique value of $x$ for which $V_{YQY^{\prime
}}=0$ gives the \textbf{amplitude} of the limit cycle. Accordingly, if
$V_{YQY^{\prime}}$ becomes negative as $Q$ moves out from origin through the
curve of $F\left(  x\right)  $ then we get a limit cycle. \textit{This
observation actually gives one with a possibility of weakening the conditions
of the classical theorem, so as to accommodate a larger class of functions
}$F\left(  x\right)  $\textit{ but still having a unique limit cycle.} Theorem
$\ref{Lienard Theorem Extension}$ is one such realizations of a stronger
version of the classical theorem, which shows that the existence of the
$($unique$)$ limit cycle actually depends on the \textbf{local} monotonicity
of $F\left(  x\right)  $ on a bounded interval containing the point where
$V_{YQY^{\prime}}$ vanishes. A limit cycle can indeed be realized even when
$F\left(  x\right)  $ is bounded as $\left\vert x\right\vert \rightarrow
\infty$ $($c.f. Example $\ref{Ex Compare Amplitude})$\newline If it happens
further that $V_{YQY^{\prime}}$ becomes positive from negative once more, then
also, by an analogous argument as above we can get a point $Q$ on the curve
$F\left(  x\right)  $, through which another limit cycle must pass. To prove
this result we consider a function $F\left(  x\right)  $ $($in section
$\ref{New Theorem Section})$ which is monotonically increasing to the right of
the point $A$ for a sufficiently large value of $x$ and then it becomes
decreasing for some subsequent values of $x$ and ultimately become negative.
The proof depends on an efficient estimate of the amplitude of the first limit cycle.
\end{remark}

\section{Extension of the Classical Theorem and an Estimate of the
Amplitude\label{Observations}}

Let, $\left(  \alpha,F\left(  \alpha\right)  \right)  $ be the coordinate of
$Q$, as shown in Figure \ref{Typical Path} and let $\alpha=\hat{\alpha}$ be
the amplitude of the limit cycle of Theorem \ref{Lienard Theorem}. It is well
known that determining the exact value of limit cycle of the Lienard system is
a relatively difficult problem $\left[  \cite{Odani},\cite{Lopez}\right]  $.
We now find an estimate of $\hat{\alpha}$, for which the corresponding
$V_{YQY^{\prime}}$ just become negative from positive. Since, $V_{YQY^{\prime
}}$ is a monotone decreasing continuous function as the point moves out of the
point $A\left(  a,0\right)  $ along the curve, without any loss of generality
we can say $V_{YQY^{\prime}}$ can just become negative from positive if at
least one of the following two cases hold, viz.,

$\left(  i\right)  $ $\ V_{YQ}=0$ but $V_{QY^{\prime}}<0$

$\left(  ii\right)  $ $V_{QY^{\prime}}=0$ but $V_{YQ}<0$.\newline The third
possibility $V_{YQ}<0$ and $V_{QY^{\prime}}<0$ can be reduced to either of the
above two cases by monotonicity and continuity of $V_{YQY^{\prime}}$, i.e. by
taking an $\alpha$ closer to $\hat{\alpha}$, $($i.e., $\alpha\rightarrow
\hat{\alpha}+0$ $)$ either one of $V_{YQ}$ and $V_{QY^{\prime}}$ can be made
to vanish. Similarly, the possibility that either one of $V_{YQ}$ and
$V_{QY^{\prime}}$ is positive while their sum is negative, can also be
eliminated by choosing $\alpha$ far from $\hat{\alpha}$ $\left(  \alpha
>\hat{\alpha}\right)  $.

\subsubsection*{Case $\left(  i\right)  $}

Here $V_{YQ}=0$ is possible if%
\begin{equation}
V_{YB}+V_{BQ}=0 \label{Potential Zero Case I}%
\end{equation}
In step $\left(  A\right)  $ of the proof of Theorem $\ref{Lienard Theorem}$
it has been proved that $V_{YB}>0$.

We are now going to show that $V_{BQ}<0$.

On the path $BQ$, we have $F\left(  x\right)  \geq0$ and $\dfrac{dy}{dt}%
=\dot{y}=-g\left(  x\right)  <0$.

Therefore,
\begin{align}
V_{BQ}  &  =%
{\displaystyle\int\limits_{BQ}}
Fdy=%
{\displaystyle\int\limits_{BQ}}
F\frac{dy}{dt}dt\nonumber\\
&  =-%
{\displaystyle\int\limits_{BQ}}
F\left(  x\left(  t\right)  \right)  g\left(  x\left(  t\right)  \right)
dt\leq0\text{.} \label{Potential Integral in t}%
\end{align}
Thus, we can say that $\left(  \text{\ref{Potential Zero Case I}}\right)  $ is
true if $\left\vert V_{YB}\right\vert =\left\vert V_{BQ}\right\vert $%
\[
\text{i.e., if }\left\vert v\left(  a,y_{+}\left(  a\right)  \right)
-v\left(  0,y_{+}\left(  0\right)  \right)  \right\vert =\left\vert v\left(
\alpha,F\left(  \alpha\right)  \right)  -v\left(  a,y_{+}\left(  a\right)
\right)  \right\vert
\]
\newline where $y_{+}\left(  0\right)  =OY$ $($Figure \ref{Typical Path}$)$.
It is possible if%
\[%
{\displaystyle\int\limits_{0}^{a}}
g\left(  u\right)  du+\dfrac{1}{2}y_{+}^{2}\left(  a\right)  -\dfrac{1}%
{2}y_{+}^{2}\left(  0\right)  =-%
{\displaystyle\int\limits_{a}^{\alpha}}
g\left(  u\right)  du-\dfrac{1}{2}F^{2}\left(  \alpha\right)  +\dfrac{1}%
{2}y_{+}^{2}\left(  a\right)
\]
So, we have%
\begin{equation}
G\left(  \alpha\right)  =\dfrac{1}{2}y_{+}^{2}\left(  0\right)  -\dfrac{1}%
{2}F^{2}\left(  \alpha\right)  \label{Definition of alfa+}%
\end{equation}
where $G\left(  x\right)  =%
{\displaystyle\int\limits_{0}^{x}}
g\left(  u\right)  du$.

Let $\alpha=\alpha^{\prime}$ be a root of $\left(
\text{\ref{Definition of alfa+}}\right)  $ $\left(  \text{existence of which
is assured by construction}\right)  $ so that%
\begin{equation}
G\left(  \alpha^{\prime}\right)  =\dfrac{1}{2}y_{+}^{2}\left(  0\right)
-\dfrac{1}{2}F^{2}\left(  \alpha^{\prime}\right)  \text{.}
\label{Definition of alfaDash1}%
\end{equation}

\subsubsection*{Case $\left(  ii\right)  $}

Here, $V_{QY^{\prime}}=0$ is possible if%
\begin{equation}
V_{QB^{\prime}}+V_{B^{\prime}Y^{\prime}}=0 \label{Potential Zero Case II}%
\end{equation}
In step $\left(  A\right)  $ of the proof of the Theorem
$\ref{Lienard Theorem}$ it is proved that $V_{B^{\prime}Y^{\prime}}>0$.

Proceeding analogous to case $\left(  i\right)  $ one establishes that
$V_{QB^{\prime}}<0$ and consequently $\left(  \ref{Potential Zero Case II}%
\right)  $ is true provided%
\begin{equation}
G\left(  \alpha\right)  =\dfrac{1}{2}y_{-}^{2}\left(  0\right)  -\dfrac{1}%
{2}F^{2}\left(  \alpha\right)  \label{Definition of alfa-}%
\end{equation}
where $OY^{\prime}=-y_{-}\left(  0\right)  ,~y_{-}\left(  0\right)  <0$
$($Figure \ref{Typical Path}$)$. If $\alpha=\alpha^{\prime\prime}$ be a root
of $\left(  \text{\ref{Definition of alfa-}}\right)  $ we have%
\begin{equation}
G\left(  \alpha^{\prime\prime}\right)  =\dfrac{1}{2}y_{-}^{2}\left(  0\right)
-\dfrac{1}{2}F^{2}\left(  \alpha^{\prime\prime}\right)  \text{.}
\label{Definition of alfaDash2}%
\end{equation}

It now follows that if we take%
\begin{equation}
\bar{\alpha}=\max\left\{  \alpha^{\prime},\alpha^{\prime\prime}\right\}
\label{Definition of alfaBar}%
\end{equation}
then for any value of $\alpha>\bar{\alpha},$ $V_{YQY^{\prime}}\leq0\ $since,
$V_{YQY^{\prime}}=V_{YQ}+V_{QY^{\prime}}$. Thus the function $F$ should be
monotonic increasing in the interval $a<x\leq\bar{\alpha}$. Notice that the
classical Lienard theorem already ensures the existence of such an
$\bar{\alpha}$. In the light of the above discussion we can now
\textit{extend} the classical Lienard theorem by weakening the unbounded
nature of the function $F$ as stated in the following theorem and cover a more
large class of functions.

\begin{theorem}
\label{Lienard Theorem Extension}The equation $\left(  \ref{Lienard Eq}%
\right)  $ has a unique limit cycle if\newline%
\begin{tabular}
[c]{rl}%
$\left(  i\right)  $ & $f$ and $g$ are continuous in $\left(  -d,d\right)  $
for sufficiently large $d$;\\
$\left(  ii\right)  $ & $F$ and $g$ are odd functions with $g\left(  x\right)
>0$ for $x>0$;\\
$\left(  iii\right)  $ & $F$ is zero only at $x=0$, $x=a$, $x=-a$ for some $a$
where $0<a<d;$\\
$\left(  iv\right)  $ & $\exists$ a number $\bar{\alpha}$ defined by $\left(
\ref{Definition of alfaBar}\right)  $ such that $F$ is monotonic increasing
in\\
& $a<x\leq\bar{\alpha}$ and nondecreasing in $\bar{\alpha}<x<d$.
\end{tabular}

\end{theorem}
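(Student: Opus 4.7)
My plan is to mimic the structure of the classical Lienard theorem (Theorem \ref{Lienard Theorem}) but to split the analysis of $V_{YQY^{\prime}}$ as a function of $\alpha$ into two regimes demarcated by $\bar{\alpha}$. The preliminary observations (a), (b) of the classical proof---symmetry of the orbits under $(x,y)\mapsto(-x,-y)$ and the sign of the slope field away from the curve $y=F(x)$---continue to apply verbatim, so the problem reduces once more to showing that the signed potential $V_{YQY^{\prime}}(\alpha)$ defined through $(\ref{Potential Integral in v})$ has exactly one zero as $\alpha$ ranges over $[0,d)$.

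For the first regime $\alpha\in(a,\bar{\alpha}\,]$, hypothesis (iv) supplies strict monotonicity of $F$, which is the only feature of $F$ invoked in steps (A)--(C) of the classical proof; those steps therefore carry over without modification and yield that $V_{YQY^{\prime}}(\alpha)$ is continuous and strictly decreasing on this interval. Step (F) of the classical proof gives $V_{YQY^{\prime}}(\alpha)>0$ for $\alpha\in[0,a]$, and by the very construction of $\bar{\alpha}=\max\{\alpha^{\prime},\alpha^{\prime\prime}\}$ through Case (i) and Case (ii) one has $V_{YQY^{\prime}}(\bar{\alpha})\leq 0$. An intermediate-value argument then produces a unique $\hat{\alpha}\in(a,\bar{\alpha}\,]$ with $V_{YQY^{\prime}}(\hat{\alpha})=0$, giving a unique closed orbit of amplitude $\hat{\alpha}$ among the paths whose parameter $\alpha$ lies in this range.

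For the second regime $\alpha\in(\bar{\alpha},d)$ I would use only the weaker nondecreasing hypothesis on $F$. Re-examining the arguments behind (A) and (B), each step only requires $F$ to be nondecreasing (not strictly increasing) on the relevant abscissae in order to conclude that $V_{YB}+V_{B^{\prime}Y^{\prime}}$ and $V_{BQB^{\prime}}$ are each nonincreasing in $\alpha$. Hence $V_{YQY^{\prime}}$ is nonincreasing on $(\bar{\alpha},d)$, and since $V_{YQY^{\prime}}(\bar{\alpha})\leq 0$ it remains $\leq 0$ throughout this interval. No further sign change is possible, which rules out any second closed orbit and completes the uniqueness part.

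The main obstacle I anticipate is the bookkeeping that justifies $V_{YQY^{\prime}}(\bar{\alpha})\leq 0$ cleanly from the definition $(\ref{Definition of alfaBar})$. Cases (i) and (ii) separately produce $\alpha^{\prime}$ and $\alpha^{\prime\prime}$ making $V_{YQ}$ and $V_{QY^{\prime}}$ individually vanish, but one must verify that choosing $\bar{\alpha}$ equal to the larger of the two keeps both half-potentials nonpositive simultaneously---this uses that each half-potential is itself monotone nonincreasing in $\alpha$, a fact that follows from the same estimates that drive steps (A)--(B). Once this is pinned down, the intermediate-value step on $(a,\bar{\alpha}\,]$ together with the nonincreasing behavior on $(\bar{\alpha},d)$ combine to yield a unique limit cycle, with no recourse to $F(x)\to\infty$.
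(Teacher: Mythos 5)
Your proposal is correct and takes essentially the same route as the paper: the paper likewise reduces everything to the observation that the construction of $\bar{\alpha}$ via Cases $(i)$ and $(ii)$ forces $V_{YQY^{\prime}}\leq 0$ for $\alpha\geq\bar{\alpha}$, reuses the classical steps for positivity on $[0,a]$ and monotone decrease on $(a,\bar{\alpha}\,]$ to produce the unique sign change, and notes that mere nondecrease of $F$ beyond $\bar{\alpha}$ prevents any further sign change. Your flagged obstacle (verifying $V_{YQY^{\prime}}(\bar{\alpha})\leq 0$ from $(\ref{Definition of alfaBar})$) is precisely the point the paper handles, more tersely, in the discussion preceding the theorem.
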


The existence of $\bar{\alpha}$ ensures a sign change in $V_{YQY^{\prime}}$
whereby we get the existence of a unique limit cycle in the finite phase
plane. The remaining part of the proof of this theorem remain same as that of
classical Lienard theorem. Also, from the above discussion and the proof of
classical Lienard theorem it follows that $V_{YQY^{\prime}}$ does not change
its sign any more if the function $F$ is simply \textit{monotone
nondecreasing} in $\bar{\alpha}<x<\infty$. In such cases the function $F$ can
\textit{even be bounded and even attain a constant value as }$x\rightarrow
\infty$, but still we get a unique limit cycle for such a bounded Lienard
system. Thus, we can indeed cover a larger class of functions than those
covered by the Lienard theorem $($c.f. \ref{Ex Compare Amplitude}$)$.

In the beginning of the proof of Theorem \ref{Lienard Theorem} we observed
that above the curve $y=F\left(  x\right)  $ we have $\dot{x}>0$ and below
$\dot{x}<0$. So, the $x$-coordinate of a point on a limit cycle will achieve
its maximum absolute value on the curve $y=F\left(  x\right)  $. Therefore,
the amplitude of a limit cycle for the Lienard system is the abscissa of the
point $Q$ lying on the curve $y=F\left(  x\right)  $. Since, for a limit cycle
we have $V_{YQY^{\prime}}=0$, so by the construction of $\bar{\alpha}$ it
follows that it is an \textit{efficient upper estimate of the amplitude} of
the limit cycle. In the following example we find the values of $\bar{\alpha}$
for the well known Van der Pol equation against different values of $\mu$ and
compare them with the results obtained in $\cite{Odani}$ and $\cite{Lopez}$.
This also gives an example of a \textit{bounded} Van der Pol equation having
same amplitude as that of the standard Van der Pol equation.

\begin{example}
\label{Ex Compare Amplitude}Here, in the following table we present estimates
of the amplitude of the limit cycle for Van der Pol equation in which
$F\left(  x\right)  =\mu\left(  \dfrac{x^{3}}{3}-x\right)  $ and $g\left(
x\right)  =x$ for different values of $\mu.$ It is clear that our estimates
are reasonably close to the exact $($numerically computed$)$ values $($as
reported in $\cite{Odani})$. Our values also appear to be much better than the
upper bound $2.3233$ of $\cite{Odani}$ $($the estimated values of
$\cite{Lopez}$ are valid only for small $\mu)$. From our numerical estimates
it follows that although the estimated values of the amplitude seem to vary
irregularly for the moderately large values of $\mu\in\left[  0,10\right]  $,
these are nevertheless bounded above by $2.05$.%
\[%
\begin{tabular}
[c]{|c|c|c|}\hline
$\mu$ & $y_{+}\left(  0\right)  $ and $y_{-}\left(  0\right)  $ & $\bar
{\alpha}$\\\hline
$0.1$ & $2.00117$ & $2.0000586437166383$\\\hline
$0.2$ & $2.007076$ & $2.002540101136999$\\\hline
$0.3$ & $2.015912$ & $2.0054678254782505$\\\hline
$0.4$ & $2.028253$ & $2.0091503375996034$\\\hline
$0.5$ & $2.044065$ & $2.013278539452526$\\\hline
$1$ & $2.1727135$ & $2.0327736318429275$\\\hline
$1.5$ & $2.3710897$ & $2.0436704679281523$\\\hline
$2$ & $2.6149725$ & $2.04739132291152$\\\hline
$2.5$ & $2.8844602$ & $2.047213463900291$\\\hline
$3$ & $3.1687156$ & $2.045311842105752$\\\hline
$3.5$ & $3.462322$ & $2.0427848260891426$\\\hline
$4.5$ & $4.06701715$ & $2.037557405718347$\\\hline
$5$ & $4.3752293$ & $2.035154629371522$\\\hline
$10$ & $7.5528123$ & $2.020095969119061$\\\hline
\end{tabular}
\ \ \ \ \
\]
We get the same result if we consider the bounded function%
\[
F\left(  x\right)  =\left\{
\begin{array}
[c]{ll}%
\mu\left(  \dfrac{x^{3}}{3}-x\right)  & x\in\left(  -2.4,2.4\right) \\
\mu\left(  \dfrac{(2.4)^{3}}{3}-2.4\right)  -\dfrac{4.76\mu}{\sin\left(
0.6\right)  }\left(  \cos\left(  0.6\right)  -\cos\left(  x-3\right)  \right)
& x\in\left(  -3,-2.4\right]  \cup\left[  2.4,3\right) \\
\mu\left(  \dfrac{(2.4)^{3}}{3}-2.4\right)  -\dfrac{4.76\mu}{\sin\left(
0.6\right)  }\left(  \cos\left(  0.6\right)  -1\right)  & x\in\left(
-\infty,-3\right]  \cup\left[  3,\infty\right)
\end{array}
\right.
\]
in the whole phase plane or the following function in finite phase plane.%
\[
F\left(  x\right)  =\left\{
\begin{array}
[c]{ll}%
\mu\left(  \dfrac{x^{3}}{3}-x\right)  & x\in\left(  -2.4,2.4\right) \\
\mu\left(  \dfrac{(2.4)^{3}}{3}-2.4\right)  -\dfrac{4.76\mu}{\sin\left(
0.6\right)  }\left(  \cos\left(  0.6\right)  -\cos\left(  x-3\right)  \right)
& x\in\left(  -3,-2.4\right]  \cup\left[  2.4,3\right)
\end{array}
\right.
\]
This also tells that the \textit{value of }$d$\textit{ need not by too large}.
A limit cycle is assured even for a moderately large $d$.
\end{example}

It follows from this example that the amplitude of the unique limit cycle is
independent of the asymptotic behaviour of $F\left(  x\right)  $ as
$x\rightarrow\infty$. Indeed, the amplitude corresponds to the point $Q\left(
\hat{\alpha},F\left(  \hat{\alpha}\right)  \right)  $ on the limit cycle for
which the integral $V_{YQY^{\prime}}=%
{\textstyle\int\limits_{YQY^{\prime}}}
F~dy$ vanishes and clearly depends on the form of $F\left(  x\right)  $ in the
finite interval $\left(  0,\hat{\alpha}\right)  $. To the best of author's
knowledge, this result apparently is not recorded clearly in the literature.
We therefore state this observation as the following corollary.

\begin{corollary}
Amplitude of the unique limit cycle of the Lienard system $\left(
\ref{Lienard System}\right)  $ is independent of the asymptotic behaviour of
$F\left(  x\right)  $ as $\left\vert x\right\vert \rightarrow\infty$.
\end{corollary}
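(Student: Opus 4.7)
The plan is to argue that the amplitude $\hat{\alpha}$ of the unique limit cycle, whose existence is guaranteed by Theorem \ref{Lienard Theorem Extension}, is completely determined by the restriction of $F$ to the bounded interval $[-\hat{\alpha},\hat{\alpha}]$, so that any modification of $F$ outside this interval that still meets the hypotheses of Theorem \ref{Lienard Theorem Extension} leaves $\hat{\alpha}$ unchanged.

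First I would recall, as already noted in the excerpt, that the $x$-coordinate attains its maximum absolute value on a phase path exactly where the path meets $y=F(x)$, since $\dot{x}=y-F(x)$ vanishes precisely there. Therefore the intersection $Q=(\hat{\alpha},F(\hat{\alpha}))$ of the limit cycle with $y=F(x)$ in $x>0$ supplies the amplitude, and the whole limit cycle $YQY'$ lies in the strip $|x|\le\hat{\alpha}$. Next, I would recall that $\hat{\alpha}$ is characterized by the single scalar condition $V_{YQY'}(\hat{\alpha})=0$, where, by \eqref{Potential Integral in v} and \eqref{Potential Function}, $V_{YQY'}=\int_{YQY'}F\,dy$ and the integrand $F$ is evaluated only along the orbit, i.e.\ only for $|x|\le\hat{\alpha}$.

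Now suppose $F_1$ and $F_2$ both satisfy conditions $(i)$--$(iv)$ of Theorem \ref{Lienard Theorem Extension} and agree on $[-\hat{\alpha},\hat{\alpha}]$. The plan is to show that the corresponding amplitudes $\hat{\alpha}_1,\hat{\alpha}_2$ both equal the common $\hat{\alpha}$. Assuming by contradiction that $\hat{\alpha}_1<\hat{\alpha}_2$ (WLOG), I would use the strict monotone decrease of $V_{YQY'}$ established in steps $(A)$--$(E)$ of the proof of Theorem \ref{Lienard Theorem} to conclude $V_{YQY'}^{(1)}(\hat{\alpha}_2)<0$ while $V_{YQY'}^{(2)}(\hat{\alpha}_2)=0$; but since the path realizing $V_{YQY'}^{(2)}$ stays in $|x|\le\hat{\alpha}_2\le\hat{\alpha}$ where $F_1\equiv F_2$, the two integrals coincide, a contradiction. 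Hence $\hat{\alpha}_1=\hat{\alpha}_2$, and the amplitude is insensitive to the tail of $F$.

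The slightly delicate step, which I would flag as the main obstacle, is the assertion that the orbit through $Q$ used to evaluate $V_{YQY'}^{(2)}$ is the \emph{same} orbit whether we use $F_1$ or $F_2$. This follows because the vector field $(y-F(x),-g(x))$ is determined pointwise by $F$ on the range of $x$-values traversed, which is $[-\hat{\alpha},\hat{\alpha}]$ by the observation in the first paragraph; so on this strip the two systems coincide and share the same orbit. Once this is granted, the integrals coincide, and the corollary follows. Example \ref{Ex Compare Amplitude} then serves as a concrete illustration: truncating the cubic $F$ outside $[-2.4,2.4]$ by a bounded extension yields precisely the same $\bar{\alpha}$, and hence the same $\hat{\alpha}$, as the standard Van der Pol equation.
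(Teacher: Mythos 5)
Your overall strategy is exactly the one the paper relies on: the amplitude is the unique zero of $V_{YQY^{\prime}}=\int_{YQY^{\prime}}F\,dy$, the half-orbit $YQY^{\prime}$ through $Q=(\hat{\alpha},F(\hat{\alpha}))$ is confined to the strip $0\le x\le\hat{\alpha}$ because $\dot{x}$ changes sign only on $y=F(x)$, and therefore both the orbit and the integral see $F$ only on a bounded interval. The paper offers essentially just this one observation as its justification, so your two-function comparison is, if anything, more explicit than what the authors write down.

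However, the contradiction step as written does not close. If $F_{1}\equiv F_{2}$ on $[-\hat{\alpha},\hat{\alpha}]$ with $\hat{\alpha}=\hat{\alpha}_{1}$ and you assume $\hat{\alpha}_{1}<\hat{\alpha}_{2}$, then the system-2 half-orbit through $(\hat{\alpha}_{2},F_{2}(\hat{\alpha}_{2}))$ reaches $x=\hat{\alpha}_{2}>\hat{\alpha}$, i.e., it leaves the strip on which the two vector fields agree; the inequality $\hat{\alpha}_{2}\le\hat{\alpha}$ that you invoke is precisely what is in question, so you cannot identify $V^{(1)}_{YQY^{\prime}}(\hat{\alpha}_{2})$ with $V^{(2)}_{YQY^{\prime}}(\hat{\alpha}_{2})$. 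The repair is to compare at $\hat{\alpha}_{1}$ instead: the half-orbit through $(\hat{\alpha}_{1},F(\hat{\alpha}_{1}))$ lies in $0\le x\le\hat{\alpha}_{1}$ for either system, the fields coincide there, so $V^{(2)}_{YQY^{\prime}}(\hat{\alpha}_{1})=V^{(1)}_{YQY^{\prime}}(\hat{\alpha}_{1})=0$, and the uniqueness of the zero of $V^{(2)}_{YQY^{\prime}}$ (the monotone sign-change argument of steps $(E)$ and $(F)$ in the proof of Theorem \ref{Lienard Theorem}) then forces $\hat{\alpha}_{2}=\hat{\alpha}_{1}$. Equivalently, and even more directly: the system-1 limit cycle is a periodic orbit of system 2 because the two vector fields agree on the strip containing it, and Theorem \ref{Lienard Theorem Extension} guarantees that system 2 has only one periodic orbit.
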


\section{The New Theorem\label{New Theorem Section}}

By the observations discussed in last section it is clear that, in the
interval $a<\alpha<\bar{\alpha}$ we obtain the limit cycle of Theorems
$\ref{Lienard Theorem}$ and $\ref{Lienard Theorem Extension}$. Moreover,
because of the condition $\left(  iv\right)  $ we see that the limit cycle
remains unique. However, if the function does not satisfy this condition, then
the limit cycle may not be unique. We now present our new theorem.

\begin{theorem}
\label{New Theorem}Let $f$ and $g$ be two functions satisfying the following
properties.\newline%
\begin{tabular}
[c]{rl}%
$\left(  i\right)  $ & $f$ and $g$ are continuous;\\
$\left(  ii\right)  $ & $F$ and $g$ are odd functions and $g\left(  x\right)
>0$ for $x>0$.;\\
$\left(  iii\right)  $ & $F$ has positive simple zeros only at $x=a_{1}$,
$x=a_{2}$ for some $a_{1}>0$ and\\
& some $a_{2}>\bar{\alpha}$, $\bar{\alpha}$ being defined by $\left(
\ref{Definition of alfaBar}\right)  $ and $\bar{\alpha}<L$, where $L$ is the
first\\
& local maxima of $F\left(  x\right)  $ in $\left[  a_{1},a_{2}\right]  ;$\\
$\left(  iv\right)  $ & $F$ is monotonic increasing in $a_{1}<x\leq\bar
{\alpha}$ and $F\left(  x\right)  \rightarrow-\infty$ as $x\rightarrow\infty
$\\
& monotonically for $x>a_{2};$%
\end{tabular}
\newline Then the equation $\left(  \ref{Lienard Eq}\right)  $ has exactly two
limit cycles around the origin.
\end{theorem}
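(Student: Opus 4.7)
Following the framework of Section~\ref{Lienard's Theorem Section}, I will treat $V_{YQY'}$ as a continuous function $V(\alpha)$ of the abscissa $\alpha$ of the rightmost point $Q=(\alpha,F(\alpha))$ of a candidate symmetric orbit, so that limit cycles correspond precisely to positive zeros of $V$. The plan is to exhibit exactly two zeros of $V$ on $(0,\infty)$, one in $(a_1,\bar\alpha]$ and one in $(a_2,\infty)$, and to rule out any others.

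For the first cycle I would restrict to $\alpha\in(a_1,\bar\alpha]$, where by hypothesis the candidate orbit is confined to $|x|\le\bar\alpha<L\le a_2$ and $F$ is monotone increasing. Splitting the orbit at the vertical line $x=a_1$ as $V(\alpha)=V_{YB_1}+V_{B_1QB_1'}+V_{B_1'Y'}$, I would invoke the comparison arguments (A)--(B) from the proof of Theorem~\ref{Lienard Theorem} with $a$ replaced by $a_1$: the outer contribution $V_{YB_1}+V_{B_1'Y'}>0$ will be monotone decreasing in $\alpha$, and $V_{B_1QB_1'}\le 0$ will likewise be monotone decreasing. Since $V$ is positive for $\alpha$ sufficiently close to $a_1$ (by the analogue of step (F) in Theorem~\ref{Lienard Theorem}) while $V(\bar\alpha)\le 0$ by the construction~\eqref{Definition of alfaBar}, strict monotonicity will then deliver a unique zero $\hat\alpha_1\in(a_1,\bar\alpha]$, the first limit cycle.

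For the second cycle I would take $\alpha>a_2$ and decompose $V(\alpha)=(V_{YB_1}+V_{B_1'Y'})+(V_{B_1B_2}+V_{B_2'B_1'})+V_{B_2QB_2'}$. In the region $x>a_2$ the function $-F$ is positive, monotone increasing, and tends to $+\infty$, so it plays exactly the role of $F$ in the classical theorem; the analogues of (B) and (D) then yield that $V_{B_2QB_2'}$ is positive, strictly monotone increasing in $\alpha$, and tends to $+\infty$. The middle contributions over the compact interval $[a_1,a_2]$ will stay uniformly bounded, and the outer pieces over $[0,a_1]$ will grow only moderately, so $V(\alpha)\to+\infty$; combined with $V(a_2)<0$ (see next paragraph) and the strict monotonicity on $(a_2,\infty)$, this will produce exactly one zero $\hat\alpha_2\in(a_2,\infty)$.

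The principal obstacle will be ruling out additional zeros in $(\bar\alpha,a_2]$, where $F$ is no longer globally monotone and the direct comparison of paths breaks down. On $(\bar\alpha,L]$ the monotonicity of $F$ is preserved, so $V$ will continue to strictly decrease from $V(\bar\alpha)\le 0$. On $(L,a_2]$ the integrand of $V_{B_1QB_1'}$ is still non-positive because $F\ge 0$ throughout $[a_1,a_2]$, but I will need to quantify the growth of $V_{YB_1}+V_{B_1'Y'}$ sharply enough to show that $V$ cannot rise back through zero before $\alpha=a_2$. This careful bookkeeping, together with the technical transcription of the Lienard monotonicity argument to the orientation-reversed regime $x>a_2$, $F<0$, is where the main work will lie.
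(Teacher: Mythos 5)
Your plan reproduces the paper's argument essentially verbatim: the same five-piece decomposition of $V_{YQY'}$ at the vertical lines $x=a_1$ and $x=a_2$, the same monotonicity and boundedness estimates showing the outer piece $V_{B_2QB_2'}$ is positive, increasing and tends to $+\infty$ while the remaining pieces stay bounded, and the same appeal to the construction of $\bar\alpha$ both for the inner cycle and for the sign of $V$ between the two cycles. The step you flag as the principal obstacle --- excluding zeros on $(\bar\alpha,a_2]$ --- is precisely the step the paper settles by asserting $V_{YQY'}<0$ there ``by the construction of $\bar\alpha$'', so your outline matches the published proof in both structure and substance.
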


\textbf{Proof:} We can get exactly the same observations as we get in
observations $\left(  a\right)  $ and $\left(  b\right)  $ in beginning of the
proof of Theorem $\ref{Lienard Theorem}$.

\begin{figure}[h]
\begin{center}
\includegraphics[height=9cm]{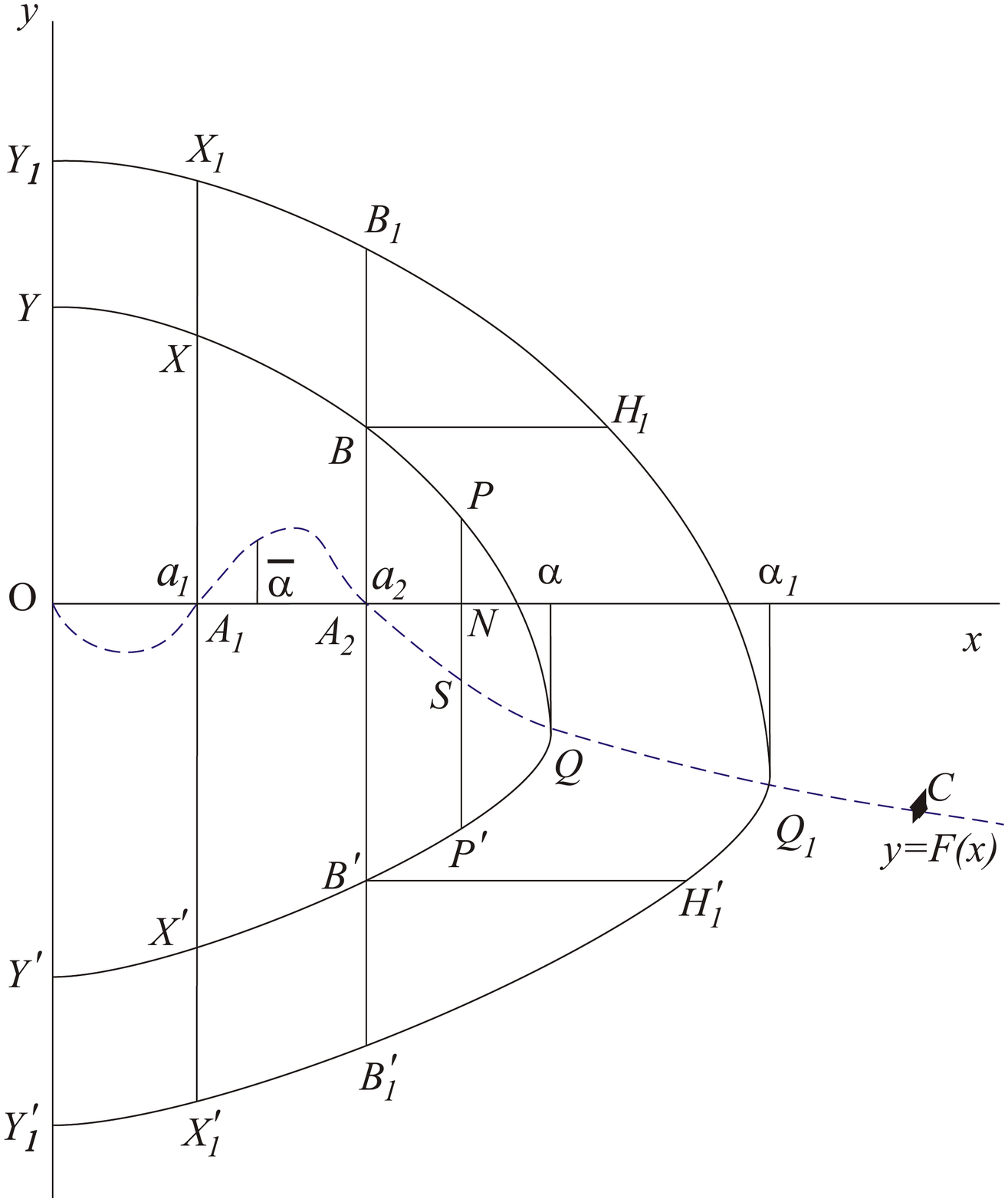}
\end{center}
\caption{{}}%
\label{New Typical Path}%
\end{figure}

By the observations in section $\ref{Observations}$ we can ensure the
existence of inner limit cycle. So, we shall now prove the existence of one
more limit cycle by showing that%
\[
OY=OY^{\prime}%
\]
once more when $x>\bar{\alpha}$. To prove the result we shall consider the
function $v\left(  x,y\right)  $ as in $\left(  \ref{Potential Function}%
\right)  $, and write,%
\begin{equation}
V_{YQY^{\prime}}=V_{YX}+V_{XB}+V_{BQB^{\prime}}+V_{B^{\prime}X^{\prime}%
}+V_{X^{\prime}Y^{\prime}} \label{Potential Decomposition}%
\end{equation}
where, $XX^{\prime}$ is a line parallel to the $y-$ axis passing through the
point $\left(  0,a_{1}\right)  $ where the function $F$ changes its sign from
negative to positive and $BB^{\prime}$ is a line parallel to the $y-$ axis
passing through the point $\left(  0,a_{2}\right)  $ where the function $F$
changes its sign from positive to negative. The proof is carried out through
the steps $\left(  A\right)  $ to $\left(  F\right)  $ below. Here we refer to
the Figure $\ref{New Typical Path}$.

\subsubsection*{Step $\left(  A\right)  :$ As $Q$ moves out from $A_{2}$ along
$A_{2}C$, $V_{YX}+V_{X^{\prime}Y^{\prime}}$ is positive and monotonic
decreasing.}

We choose two points $Q\left(  \alpha,F\left(  \alpha\right)  \right)  $ and
$Q_{1}\left(  \alpha_{1},F\left(  \alpha_{1}\right)  \right)  $ on the curve
of $F\left(  x\right)  $ where $\alpha_{1}>\alpha$. Let $YQY^{\prime}$ and
$Y_{1}Q_{1}Y_{1}^{\prime}$ be two paths through $Q$ and $Q_{1}$ respectively.
On the segments $YX$ and $Y_{1}X_{1}$ we have%
\[
y>0,~F\left(  x\right)  <0\text{ and }y-F\left(  x\right)  >0\text{.}%
\]
Now,%
\begin{equation}%
\begin{tabular}
[c]{lc}
& $\left(  y-F\left(  x\right)  \right)  _{YX}<\left(  y-F\left(  x\right)
\right)  _{Y_{1}X_{1}}$\\
$\implies$ & $\left(  \dfrac{1}{y-F\left(  x\right)  }\right)  _{YX}>\left(
\dfrac{1}{y-F\left(  x\right)  }\right)  _{Y_{1}X_{1}}$%
\end{tabular}
\nonumber
\end{equation}
Since $g\left(  x\right)  >0$ for $x>0$, we have%
\[
\left(  \dfrac{-g\left(  x\right)  }{y-F\left(  x\right)  }\right)
_{YX}<\left(  \dfrac{-g\left(  x\right)  }{y-F\left(  x\right)  }\right)
_{Y_{1}X_{1}}%
\]
So by $\left(  \ref{Gradient of Lienard System}\right)  $ we get%
\begin{equation}
\left(  \dfrac{dy}{dx}\right)  _{YX}<\left(  \dfrac{dy}{dx}\right)
_{Y_{1}X_{1}}<0 \label{Gradient Comparison1}%
\end{equation}
Therefore,%
\begin{equation}
V_{YX}=%
{\displaystyle\int\limits_{YX}}
Fdy=%
{\displaystyle\int\limits_{YX}}
\left(  -F\right)  \left(  -\frac{dy}{dx}\right)  dx\nonumber
\end{equation}
Using $\left(  \ref{Gradient Comparison1}\right)  $ we get%
\[
V_{YX}>%
{\displaystyle\int\limits_{Y_{1}X_{1}}}
\left(  -F\right)  \left(  -\frac{dy}{dx}\right)  dx
\]
Since, $F$ and $dy$ are positive on $Y_{1}X_{1}$ we have%
\begin{equation}
V_{YX}>%
{\displaystyle\int\limits_{Y_{1}X_{1}}}
Fdy=V_{Y_{1}X_{1}}>0\text{.} \label{Potential Comparison1}%
\end{equation}
Next, on the segments $X^{\prime}Y^{\prime}$ and $X_{1}^{\prime}Y_{1}^{\prime
}$ we have%
\[
y<0,~F\left(  x\right)  <0\text{ and }y-F\left(  x\right)  <0\text{.}%
\]
Now,%
\[%
\begin{tabular}
[c]{lcl}
& $\left(  y-F\left(  x\right)  \right)  _{X^{\prime}Y^{\prime}}>\left(
y-F\left(  x\right)  \right)  _{X_{1}^{\prime}Y_{1}^{\prime}}$ & \\
$\implies$ & $\left(  \dfrac{-g\left(  x\right)  }{y-F\left(  x\right)
}\right)  _{X^{\prime}Y^{\prime}}>\left(  \dfrac{-g\left(  x\right)
}{y-F\left(  x\right)  }\right)  _{X_{1}^{\prime}Y_{1}^{\prime}}$ &
\end{tabular}
\
\]
So, by $\left(  \ref{Gradient of Lienard System}\right)  $%
\begin{equation}
\left(  \dfrac{dy}{dx}\right)  _{X^{\prime}Y^{\prime}}>\left(  \dfrac{dy}%
{dx}\right)  _{X_{1}^{\prime}Y_{1}^{\prime}}%
>0.\ \ \ \label{Gradient Comparison2}%
\end{equation}
Therefore,%
\begin{equation}
V_{X^{\prime}Y^{\prime}}=%
{\displaystyle\int\limits_{X^{\prime}Y^{\prime}}}
Fdy=%
{\displaystyle\int\limits_{Y^{\prime}X^{\prime}}}
\left(  -F\right)  \frac{dy}{dx}dx\nonumber
\end{equation}
Using $\left(  \ref{Gradient Comparison2}\right)  $ we get%
\[
V_{X^{\prime}Y^{\prime}}>%
{\displaystyle\int\limits_{Y_{1}^{\prime}X_{1}^{\prime}}}
\left(  -F\right)  \frac{dy}{dx}dx~
\]
Since, $F$ and $dy$ are negative on $X_{1}^{\prime}Y_{1}^{\prime}$ we have%
\begin{equation}
V_{X^{\prime}Y^{\prime}}>%
{\displaystyle\int\limits_{X_{1}^{\prime}Y_{1}^{\prime}}}
Fdy=V_{X_{1}^{\prime}Y_{1}^{\prime}}>0\text{.} \label{Potential Comparison2}%
\end{equation}
From $\left(  \ref{Potential Comparison1}\right)  $ and $\left(
\ref{Potential Comparison2}\right)  $ we have%
\[
V_{YX}+V_{X^{\prime}Y^{\prime}}>V_{Y_{1}X_{1}}+V_{X_{1}^{\prime}Y_{1}^{\prime
}}>0\text{.}%
\]
Therefore, $V_{YX}+V_{X^{\prime}Y^{\prime}}$ is positive and monotonic
decreasing as the point $Q$ moves out from $A_{2}$ along $A_{2}C$.

\subsubsection*{Step $\left(  B\right)  :$ As $Q$ moves out from $A_{2}$ along
$A_{2}C$, $V_{XB}+V_{B^{\prime}X^{\prime}}$ is negative and monotonic
increasing.}

On the segments $XB$ and $X_{1}B_{1}$ we have%
\[
y>0,~F\left(  x\right)  <0\text{ and }y-F\left(  x\right)  >0\text{.}%
\]
Now,%
\[%
\begin{tabular}
[c]{lcl}
& $\left(  y-F\left(  x\right)  \right)  _{XB}<\left(  y-F\left(  x\right)
\right)  _{X_{1}B_{1}}$ & \\
$\implies$ & $\left(  \dfrac{-g\left(  x\right)  }{y-F\left(  x\right)
}\right)  _{XB}<\left(  \dfrac{-g\left(  x\right)  }{y-F\left(  x\right)
}\right)  _{X_{1}B_{1}}$ &
\end{tabular}
\]
So, by $\left(  \ref{Gradient of Lienard System}\right)  $ we get%
\begin{equation}
\left(  \dfrac{dy}{dx}\right)  _{XB}<\left(  \dfrac{dy}{dx}\right)
_{X_{1}B_{1}}<0\ \ \ \label{Gradient Comparison3}%
\end{equation}
Therefore,%
\begin{equation}
V_{XB}=%
{\displaystyle\int\limits_{XB}}
Fdy=%
{\displaystyle\int\limits_{XB}}
F\frac{dy}{dx}dx\nonumber
\end{equation}
Using $\left(  \ref{Gradient Comparison3}\right)  $ we get%
\begin{equation}
V_{XB}<%
{\displaystyle\int\limits_{X_{1}B_{1}}}
F\frac{dy}{dx}dx=%
{\displaystyle\int\limits_{X_{1}B_{1}}}
Fdy=V_{X_{1}B_{1}}<0 \label{Potential Comparison3}%
\end{equation}
\newline since, $F>0$ and $dy<0$ on $XB$ and $X_{1}B_{1}$.

Next, on the segments $B^{\prime}X^{\prime}$ and $B_{1}^{\prime}X_{1}^{\prime
}$ we have%
\[
y<0,~F\left(  x\right)  >0\text{ and }y-F\left(  x\right)  <0\text{.}%
\]
Now,%
\[%
\begin{tabular}
[c]{lc}
& $\left(  y-F\left(  x\right)  \right)  _{B^{\prime}X^{\prime}}>\left(
y-F\left(  x\right)  \right)  _{B_{1}^{\prime}X_{1}^{\prime}}$\\
$\implies$ & $\left(  \dfrac{-g\left(  x\right)  }{y-F\left(  x\right)
}\right)  _{B^{\prime}X^{\prime}}>\left(  \dfrac{-g\left(  x\right)
}{y-F\left(  x\right)  }\right)  _{B_{1}^{\prime}X_{1}^{\prime}}$%
\end{tabular}
\]
Using $\left(  \ref{Gradient of Lienard System}\right)  $ we get%
\begin{equation}
\left(  \dfrac{dy}{dx}\right)  _{B^{\prime}X^{\prime}}>\left(  \dfrac{dy}%
{dx}\right)  _{B_{1}^{\prime}X_{1}^{\prime}}%
>0.\ \ \label{Gradient Comparison4}%
\end{equation}
Therefore,%
\begin{equation}
V_{B^{\prime}X^{\prime}}=%
{\displaystyle\int\limits_{B^{\prime}X^{\prime}}}
Fdy=-%
{\displaystyle\int\limits_{X^{\prime}B^{\prime}}}
F\frac{dy}{dx}dx=%
{\displaystyle\int\limits_{X^{\prime}B^{\prime}}}
F\left(  -\frac{dy}{dx}\right)  dx\nonumber
\end{equation}
So, by $\left(  \ref{Gradient Comparison4}\right)  $ we have%
\begin{equation}
V_{B^{\prime}X^{\prime}}<%
{\displaystyle\int\limits_{X_{1}^{\prime}B_{1}^{\prime}}}
F\left(  -\frac{dy}{dx}\right)  dx~=%
{\displaystyle\int\limits_{B_{1}^{\prime}X_{1}^{\prime}}}
Fdy=V_{B_{1}^{\prime}X_{1}^{\prime}}<0\text{.} \label{Potential Comparison4}%
\end{equation}
\newline since, $F>0$ and $dy<0$ on $B^{\prime}X^{\prime}$ and $B_{1}^{\prime
}X_{1}^{\prime}$.

From $\left(  \ref{Potential Comparison3}\right)  $ and $\left(
\ref{Potential Comparison4}\right)  $ we have%
\[
V_{XB}+V_{B^{\prime}X^{\prime}}<V_{X_{1}B_{1}}+V_{B_{1}^{\prime}X_{1}^{\prime
}}<0\text{.}%
\]
Therefore, $V_{XB}+V_{B^{\prime}X^{\prime}}$ is negative and monotonic
increasing as the point $Q$ moves out from $A_{2}$ along $A_{2}C$.

\subsubsection*{Step $\left(  C\right)  :$ As $Q$ moves out from $A_{2}$ along
$A_{2}C$, $V_{BQB^{\prime}}$ is positive and monotonic increasing and tends to
$+\infty$ as the path recedes to infinity.}

On $BQB^{\prime}$ and $B_{1}Q_{1}B_{1}^{\prime}$, we have $F\left(  x\right)
<0$. We draw $BH_{1}$ and $B^{\prime}H_{1}^{\prime}$ parallel to $x-$ axis.

Therefore,%
\begin{align*}
V_{B_{1}Q_{1}B_{1}^{\prime}}  &  =%
{\displaystyle\int\limits_{B_{1}Q_{1}B_{1}^{\prime}}}
Fdy\\
&  =%
{\displaystyle\int\limits_{B_{1}^{\prime}Q_{1}B_{1}}}
\left(  -F\right)  dy\\
&  \geq%
{\displaystyle\int\limits_{H_{1}^{\prime}Q_{1}H_{1}}}
\left(  -F\right)  dy
\end{align*}
since, $F\left(  x\right)  <0$ and $dy>0$ for points on $H_{1}^{\prime}%
Q_{1}H_{1}$. Again since,
\[
\left.  F\left(  x\right)  \right]  _{B^{\prime}QB}\geq\left.  F\left(
x\right)  \right]  _{H_{1}^{\prime}Q_{1}H_{1}}%
\]
for same value of $y$ we get%

\begin{align}
V_{B_{1}Q_{1}B_{1}^{\prime}}  &  \geq%
{\displaystyle\int\limits_{H_{1}^{\prime}Q_{1}H_{1}}}
\left(  -F\right)  dy\geq%
{\displaystyle\int\limits_{B^{\prime}QB}}
\left(  -F\right)  dy\nonumber\\
&  =%
{\displaystyle\int\limits_{BQB^{\prime}}}
Fdy\nonumber\\
&  =V_{BQB^{\prime}}\nonumber\\%
\begin{tabular}
[c]{l}%
$\implies V_{B_{1}Q_{1}B_{1}^{\prime}}$%
\end{tabular}
&  \geq V_{BQB^{\prime}}. \label{Potential Comparison5}%
\end{align}

Next, let $S$ be a point on the curve of $F\left(  x\right)  $, to the right
of $A_{2}$, and let $BQB^{\prime}$ be an arbitrary path, with $Q$ to the right
of $S$. The straight line $PNSP^{\prime}$ is parallel to the $y-$ axis. Then,%
\begin{align}
V_{BQB^{\prime}}  &  =%
{\displaystyle\int\limits_{BQB^{\prime}}}
F\left(  x\right)  dy\nonumber\\
&  =%
{\displaystyle\int\limits_{B^{\prime}QB}}
\left(  -F\left(  x\right)  \right)  dy\nonumber\\
&  \geq%
{\displaystyle\int\limits_{P^{\prime}QP}}
\left(  -F\left(  x\right)  \right)  dy \label{Potential Comparison5 by y}%
\end{align}
since, $\left(  -F\left(  x\right)  \right)  \geq0$ and $dy\geq0$ along
$B^{\prime}QB$. Now by condition $\left(  iv\right)  $ of this theorem it
follows that $F$ is monotonic decreasing for $x>a_{2}$ and so we have
$\left\vert F\left(  x\right)  \right\vert \geq NS$ on $P^{\prime}QP$ and
since further $F\left(  x\right)  \leq0$ on $P^{\prime}QP$ so this implies
$-F\left(  x\right)  \geq NS$ on $P^{\prime}QP$. Again, $PP^{\prime}\geq
NP^{\prime}$. Thus we get%
\[
V_{BQB^{\prime}}\geq%
{\displaystyle\int\limits_{P^{\prime}QP}}
NS~dy=NS%
{\displaystyle\int\limits_{P^{\prime}QP}}
dy=NS\cdot PP^{\prime}\geq NS\cdot NP^{\prime}~\text{.}%
\]
But as $Q$ goes to infinity towards the right , $NP^{\prime}\rightarrow\infty
$. Hence, we can conclude that $V_{BQB^{\prime}}$ is positive and monotonic
increasing and tends to $+\infty$ as the paths recede to infinity.

\subsubsection*{Step $\left(  D\right)  :$}

From steps $\left(  A\right)  $ and $\left(  B\right)  $ it follows that the
quantities $V_{YX}+V_{X^{\prime}Y^{\prime}}$ and $V_{XB}+V_{B^{\prime
}X^{\prime}}$ are bounded quantities. Thus by $\left(
\ref{Potential Decomposition}\right)  $ and by step $\left(  C\right)  $ it
follows that $V_{YQY^{\prime}}$ is monotonic increasing to $+\infty$ to the
right of $A_{2}$.

\subsubsection*{Step $\left(  E\right)  :$}

By the construction of $\bar{\alpha}$ it is clear that $V_{YQY^{\prime}}<0$ in
$\bar{\alpha}\leq x<a_{2}$ i.e., to the left of $A_{2}$. Again from step
$\left(  D\right)  $ we conclude that $V_{YQY^{\prime}}$ ultimately becomes
positive as $Q$ moves out of $A_{2}$ along the curve of $F\left(  x\right)  $.
Therefore, by the same reason given in conclusion of the Theorem
$\ref{Lienard Theorem}$, it follows that there is one and only one path in the
region $x>\bar{\alpha}$ such that%
\[
V_{YQY^{\prime}}=0\text{.}%
\]
Also, by $\left(  \ref{Potential Zero}\right)  $ and the symmetry of the path
it is clear that the path is closed.

\subsubsection*{Step $\left(  F\right)  :$}

By the construction of $\bar{\alpha}$ and by step $\left(  E\right)  $ it is
clear that equation $\left(  \ref{Lienard Eq}\right)  $ has exactly two limit
cycles around the origin, the second limit cycle surrounds the first one. This
completes the proof of the Theorem $\ref{New Theorem}$.

\begin{remark}
It also follows from the proof that both the limit cycles are \textit{simple}
$($c.f., Remark $\ref{Simple Limit Cycle})$ that neither can bifurcate under
any small $C^{1}$ perturbation satisfying the conditions of the theorem
\end{remark}

\begin{remark}
One cannot assume that $V_{YQY^{\prime}}<0$ if $\bar{\alpha}\geq L$. We give a
counter example below.
\end{remark}

\begin{remark}
It is well known that two consecutive limit cycle cannot both be stable
$\left(  \text{unstable}\right)  $. Because of our choice of the function
$F\left(  x\right)  $ $($negative and monotone decreasing at the right of and
near the origin and infinity$)$, the inner limit cycle is stable and outer
limit cycle is unstable $($in reverse to those of reference $\left[
\cite{Odani N}\text{, }\cite{Holst Sunburg}\right]  )$.
\end{remark}

The existence of exactly $N$ limit cycles is established by an easy extension
of the above proof $\cite{Palit Datta 2}$. We state the theorem as follows. A
brief outline of its proof is given in the Appendix.

\begin{theorem}
\label{New Theorem N}Let $f$ and $g$ be two functions satisfying the following
properties.\newline%
\begin{tabular}
[c]{rl}%
$\left(  i\right)  $ & $f$ and $g$ are continuous;\\
$\left(  ii\right)  $ & $F$ and $g$ are odd functions and $g\left(  x\right)
>0$ for $x>0$.;\\
$\left(  iii\right)  $ & $F$ has $N$ number of positive simple zeros only at
$x=a_{i}$, $i=1,2,\ldots,N$\\
& where $0<a_{1}<a_{2}<\ldots<a_{N}$ such that in each interval $I_{i}=\left[
a_{i},a_{i+1}\right]  $,\\
& $i=1,2,\ldots,N-1$, there exists $\bar{\alpha}_{i}$, satisfying properties
given by $\left(  \text{\ref{Definition of alfaBar}}\right)  $,\\
& such that $\bar{\alpha}_{i}<L_{i}$ where $L_{i}$ is the unique extremum in
$I_{i}$,\\
& $i=1,\ldots,N-2$ and $L_{N-1}$, the first local extremum in $\left[
a_{N-1},a_{N}\right]  $.\\
$\left(  iv\right)  $ & $F$ is monotonic in $a_{i}<x\leq\bar{\alpha}_{i}$
$\forall$ $i$ and $\left\vert F\left(  x\right)  \right\vert \rightarrow
\infty$ as $x\rightarrow\infty$\\
& monotonically for $x>a_{N}$.
\end{tabular}
\newline Then the equation $\left(  \text{\ref{Lienard Eq}}\right)  $ has
exactly $N$ limit cycles around the origin, all are simple.
\end{theorem}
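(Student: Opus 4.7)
The natural strategy is induction on $N$, with the base cases $N=1$ (Theorem~\ref{Lienard Theorem Extension}) and $N=2$ (Theorem~\ref{New Theorem}) already in hand; as the introduction emphasizes, these are the ``non-trivial initial hypotheses'' of the induction. Assuming the result for $N-1$, I would argue in two stages: first, that the $N-1$ limit cycles produced by the inductive hypothesis persist in the presence of the additional zero $a_N$ and the associated monotonic tail; and second, that exactly one further limit cycle appears in the outermost region $\alpha > \bar{\alpha}_{N-1}$. The first stage follows from the observation that the integral $V_{YQY'}$ along a closed curve whose turning point $Q = (\alpha, F(\alpha))$ satisfies $\alpha \leq \bar{\alpha}_{N-1} < L_{N-1} < a_N$ depends only on the restriction of $F$ to $|x| \leq \bar{\alpha}_{N-1}$. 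Hence one may replace $F$ outside this interval by any monotone extension meeting the hypotheses of the $(N-1)$-zero case; the inductive hypothesis then delivers the inner $N-1$ cycles of the modified system, and these are automatically cycles of the original system as well.

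The second stage mirrors the multi-step architecture of the proof of Theorem~\ref{New Theorem}. Insert vertical lines $X^{(i)} X^{(i)\prime}$ through $(0, a_i)$ for $i=1,\ldots,N$, splitting the path $YQY'$ and writing
\[
V_{YQY'} = \bigl(V_{Y X^{(1)}} + V_{X^{(1)\prime} Y'}\bigr) + \sum_{i=1}^{N-1} \bigl(V_{X^{(i)} X^{(i+1)}} + V_{X^{(i+1)\prime} X^{(i)\prime}}\bigr) + V_{X^{(N)} Q X^{(N)\prime}}.
\]
The comparison argument driving Steps (A) and (B) of Theorem~\ref{New Theorem} --- namely, that a larger $\alpha$ forces $|y - F(x)|$ to increase along each vertical strip, hence $|dy/dx|$ to decrease, which controls both the sign and the monotonicity of $\int F\,dy$ on that strip --- applies verbatim to each of the $N$ bracketed paired contributions. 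The sign alternation from strip to strip is governed by the sign of $F$ on the strip, which flips at every zero $a_i$ by condition (iii), so each paired bracket is monotone in $\alpha$ and uniformly bounded. For the central contribution $V_{X^{(N)} Q X^{(N)\prime}}$, condition (iv) supplies $|F(x)| \to \infty$ monotonically for $x > a_N$, and the $NS \cdot NP'$ lower bound of Step (C) forces this central term to diverge monotonically to $+\infty$ or $-\infty$, with the sign determined by whether $F$ is eventually positive or negative beyond $a_N$.

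Combining these facts, $V_{YQY'}$ is a bounded quantity plus a monotone divergent one, so it tends to $\pm\infty$ as $\alpha \to \infty$. By the very construction of $\bar{\alpha}_{N-1}$ through (\ref{Definition of alfaBar}), $V_{YQY'}$ carries the opposite sign just to the right of $\bar{\alpha}_{N-1}$; the intermediate value theorem, together with the monotonicity inherited from the dominant central term, yields exactly one zero of $V_{YQY'}$ in $(\bar{\alpha}_{N-1}, \infty)$. The symmetry $(x,y) \mapsto (-x,-y)$ promotes this zero to the $N$-th closed orbit, and strict monotonicity of $V_{YQY'}$ across it ensures that this cycle, like the inner $N-1$ already guaranteed, is simple. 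The principal obstacle is not analytic but combinatorial: the parity bookkeeping needed to verify that at each $\bar{\alpha}_i$ the sign of $V_{YQY'}$ is the one required for the induction to engage cleanly at the next stage, and that the monotonicity directions of the $N-1$ bounded paired contributions line up with the monotonicity of the dominant central piece so that the resulting sum has precisely one zero rather than several.
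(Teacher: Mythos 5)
Your proposal follows essentially the same route as the paper's Appendix: induction on $N$ with the non-trivial base cases $N=1$ and $N=2$, the identical decomposition of $V_{YQY'}$ into paired strip contributions across vertical lines through the zeros $a_i$ plus a divergent central term $V_{BQB'}$, and a sign-change argument to isolate the single outermost cycle. Your explicit persistence argument for the inner $N-1$ cycles (replacing $F$ outside $|x|\leq\bar{\alpha}_{N-1}$ by a monotone extension and invoking the inductive hypothesis) is a small but welcome tightening of a step the paper leaves implicit, and like the paper you defer the same parity bookkeeping to the detailed write-up.
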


\begin{remark}
The conditions $\left(  i\right)  $ and $\left(  iv\right)  $ of Theorem
\ref{New Theorem} and Theorem \ref{New Theorem N} may be weakened following
Theorem \ref{Lienard Theorem Extension}. For instance, the condition $\left(
iv\right)  $ of Theorem \ref{New Theorem} may be restated as\newline%
\begin{tabular}
[c]{rl}%
$\left(  iv\right)  $ & $F$ is monotonic increasing in $a_{1}<x\leq\bar
{\alpha}=\bar{\alpha}_{1}$ $\left(  \text{say}\right)  $ and $\exists$ a
number $\bar{\alpha}_{2}$\\
& given by $\left(  \ref{Definition of alfaBar}\right)  $ such that $F$ is
monotonic decreasing in $a_{2}<x\leq\bar{\alpha}_{2}$ and\\
& nonincreasing in $\bar{\alpha}_{2}<x<d$.
\end{tabular}

\end{remark}

\section{Examples\label{Example}}

It is shown in section 3.3 of \cite{Holst Sunburg} that the limit cycles of
the autonomous system%
\begin{equation}
\left.
\begin{array}
[c]{c}%
\dot{x}=y\\
\dot{y}=-x-\mu h\left(  x,\dot{x}\right)
\end{array}
\right\}  \label{Canonical System}%
\end{equation}
are asymptotic to the circle $x^{2}+y^{2}=r^{2}$ as $\mu\rightarrow0$ where
the values of $r$ are the roots of the equation%
\begin{equation}
\Phi\left(  r\right)  :=\int_{0}^{2\pi}h\left(  r\sin u,r\cos u\right)  \cos
u~du=0\text{.} \label{Definition of phi}%
\end{equation}
We note that this is not the Lienard system. It is the canonical phase plane
for Lienard equation. The phase diagram of above system and the Lienard
system, however, should be similar. Here we take $\mu=0.1,$ $h\left(
x,\dot{x}\right)  =\left(  -4+75x^{2}-50kx^{4}\right)  \dot{x},$ $k\neq0$ so
that%
\[
f\left(  x\right)  =\mu\left(  -4+75x^{2}-50kx^{4}\right)  =-0.4+7.5x^{2}%
-5kx^{4}%
\]%
\[
F\left(  x\right)  =-0.4x+2.5x^{3}-kx^{5}%
\]
and%
\begin{align*}
\Phi\left(  r\right)   &  =\int_{0}^{2\pi}\left(  -4+75r^{2}\sin^{2}%
u-50kr^{4}\sin^{4}u\right)  r\cos u\cdot\cos u~du\\
&  =-\frac{1}{4}\pi r\left(  25kr^{4}-75r^{2}+16\right)  \text{.}%
\end{align*}
Therefore, $\left(  \ref{Definition of phi}\right)  $ reduces to%
\[
-\frac{1}{4}\pi r\left(  25kr^{4}-75r^{2}+16\right)  =0
\]
giving%
\[
r^{2}=\frac{1}{10k}\left(  15\pm\sqrt{225-64k}\right)  \text{.}%
\]
So, $\left(  \ref{Definition of phi}\right)  $ has real and distinct roots if
$225>64k$ i.e., if $k<3.515625$, real and repeated if $k=3.515625$ and
imaginary if $k>3.515625$. Therefore it follows that we will get two distinct
limit cycles, which are asymptotic to the circles corresponding to the above
two distinct values of $r$ if $k<3.515625$. Similarly, we will get only one
limit cycle when $k=3.515625$ and no limit cycle when $k>3.515625$. It can be
verified that the system undergoes a saddle node bifurcation at $k=3.515625$.

We note that the point $\left(  \bar{\alpha},F\left(  \bar{\alpha}\right)
\right)  $ on the limit cycle in the Lienard plane gets transformed to the
point $\left(  -\bar{\alpha},0\right)  $ lying on the almost circular limit
cycle of radius $r\gtrsim r_{1}$ $($in the canonical phase plane$)$ where
$r_{1}^{2}=\frac{1}{10k}\left(  15-\sqrt{225-64k}\right)  $ $(r_{1}$
corresponds to the first limit cycle$)$ under the transformation%
\[
x=-u,~y=-v+F\left(  u\right)
\]
$\left(  u,v\right)  $ and $\left(  x,y\right)  $ being the corresponding
points in Lienard plane and canonical phase plane with $f$ an even function.
We thus have%
\[
\bar{\alpha}\gtrsim\sqrt{\frac{1}{10k}\left(  15-\sqrt{225-64k}\right)
\text{.}}%
\]
We now present the phase diagrams of the above systems in Lienard plane in the
following examples for different values of $k$. These examples justify our new
theorem. We use Mathematica 5.1 in constructing the examples.

\begin{example}
\label{Ex1}Here we consider the autonomous system $\left(
\ref{Lienard System}\right)  $ with $k=3.65$, $f\left(  x\right)
=-0.4+7.5x^{2}-5kx^{4}$ $g\left(  x\right)  =x$ and $F\left(  x\right)
=-0.4x+2.5x^{3}-kx^{5}$. The phase diagram in Lienard plane is shown in Figure
$\ref{Ex Phase 1}$ \begin{figure}[ph]
\begin{center}
\mbox{
\subfigure[]{\includegraphics[height=6cm]{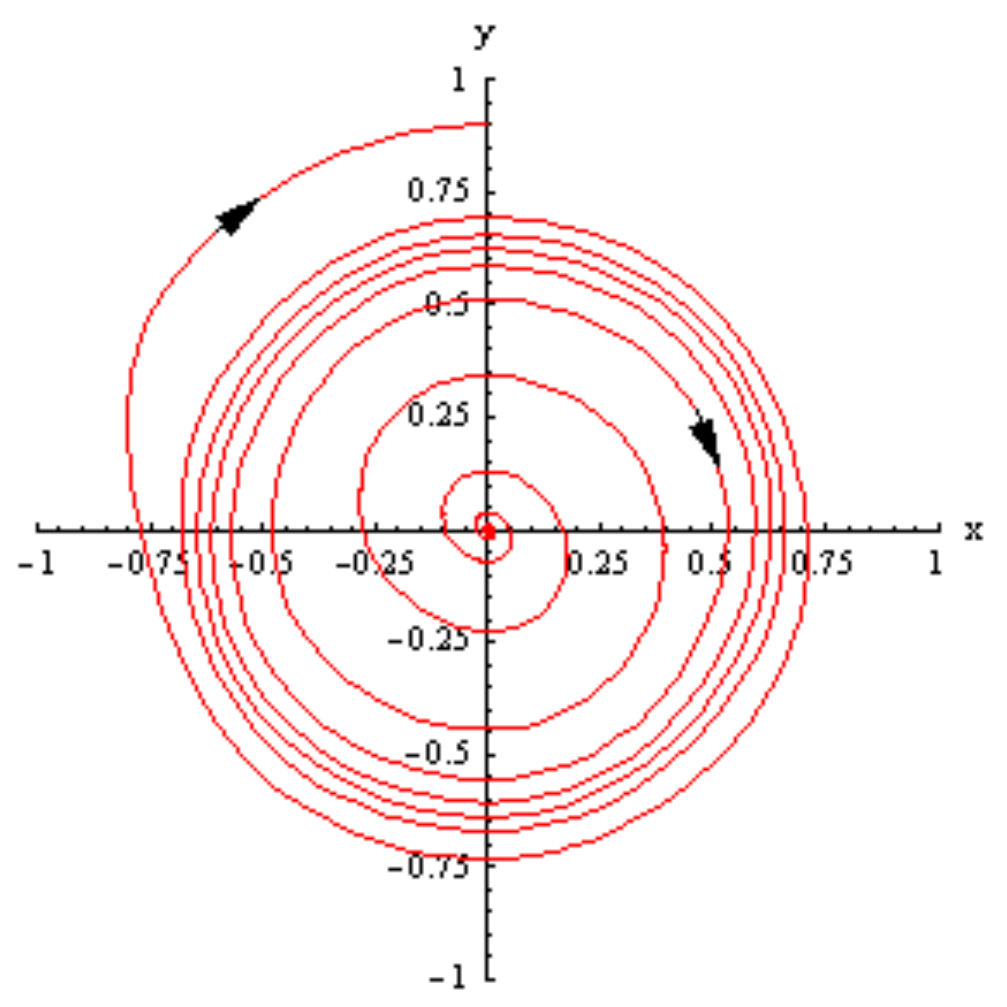}\label{Ex Phase 1}}
\subfigure[]{\includegraphics[height=6cm]{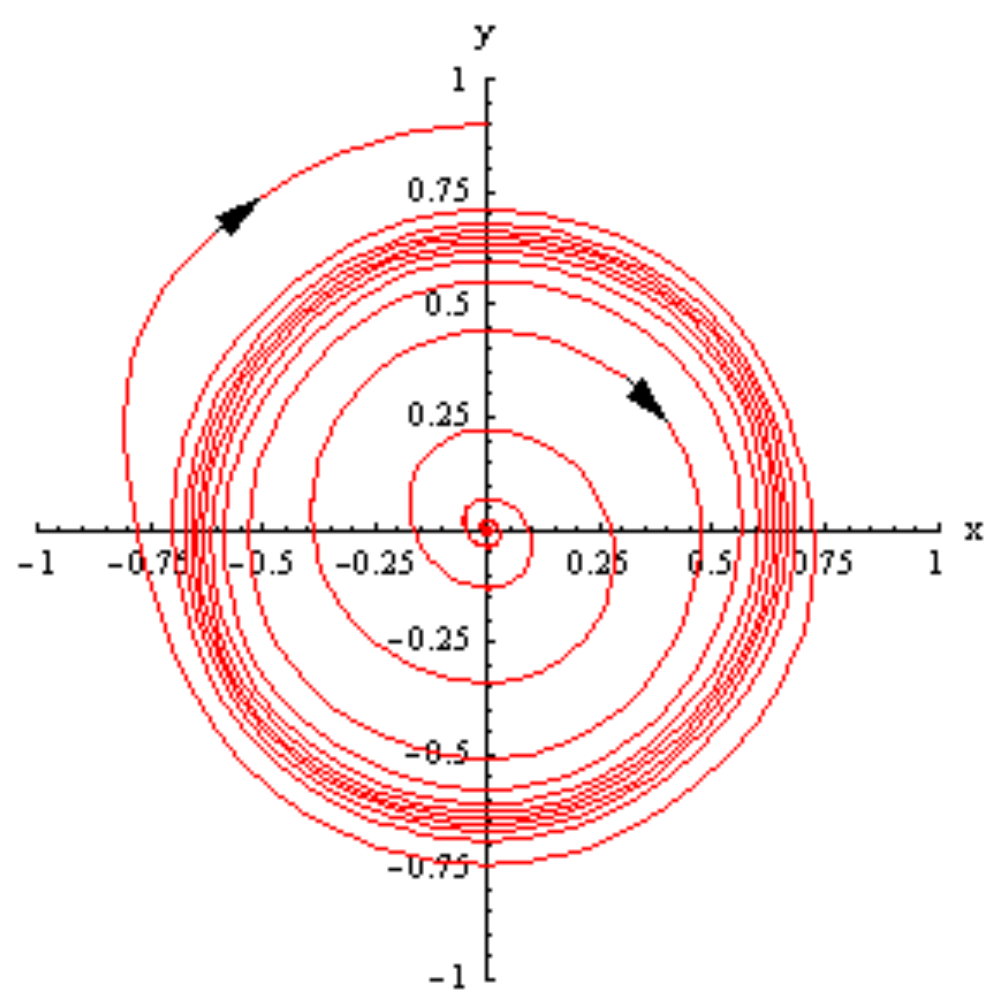}\label{Ex Phase 2}}
} \mbox{
\subfigure[]{\includegraphics[height=6cm]{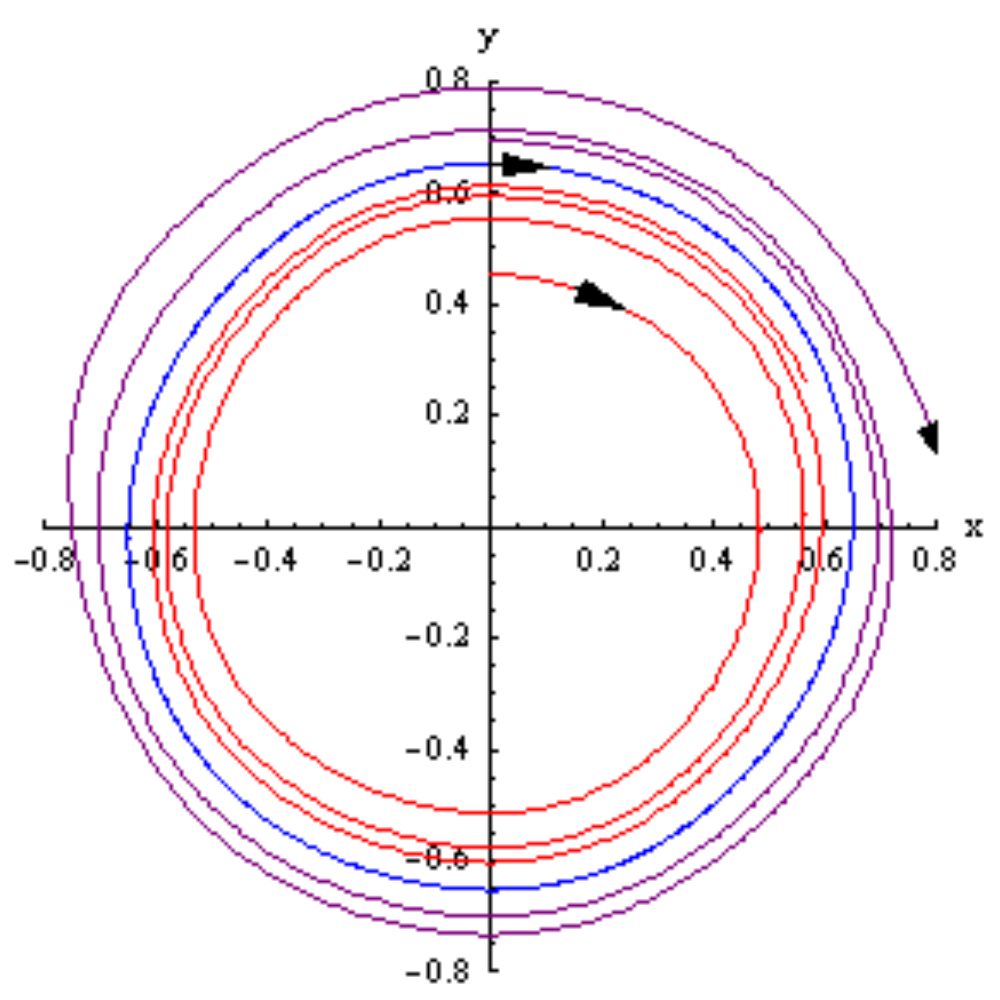}\label{Ex Unique Cycle}}
\subfigure[]{\includegraphics[height=6cm]{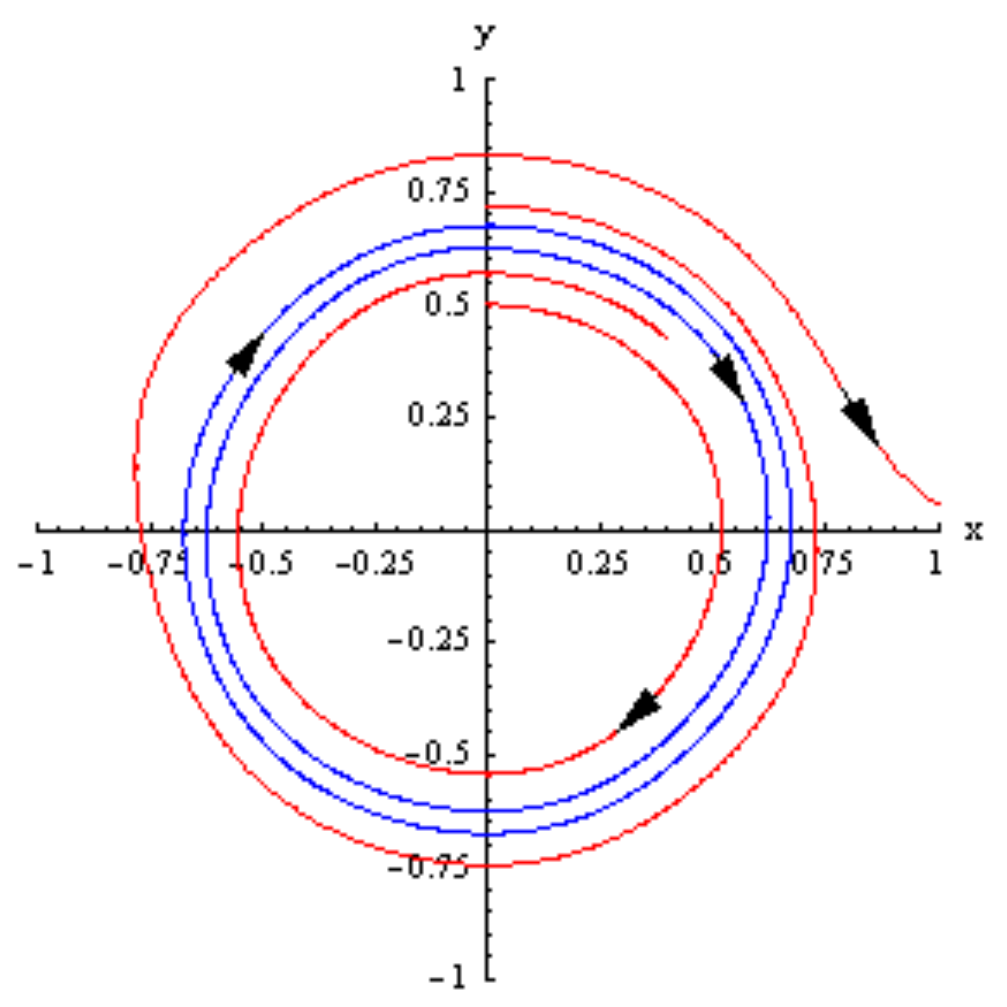}\label{Ex Not Suff 2 Cycle}}
}
\end{center}
\caption{{}The phase diagram of the system $\left(  \ref{Lienard System}%
\right)  $ in Lienard plane\newline$\left(  a\right)  $ with $k=3.65$, and
center as a repelling node in Example $\ref{Ex1}$,\newline$\left(  b\right)  $
with $k=3.57$ in Example $\ref{Ex1}$,\newline$\left(  c\right)  $ with
$k=3.515625$, and one limit cycle in Example $\ref{Ex One Limit Cycle}%
$,\newline$\left(  d\right)  $ with $k=3.5$, and two limit cycles in Example
$\ref{Ex Two Limit Cycle}$.}%
\label{Ex Phase Pic}%
\end{figure}which does not have any limit cycle. Again we take $k=3.57$ in the
above system. The corresponding phase diagram is shown in Figure
$\ref{Ex Phase 2}$. This phase diagram also does not contain any limit cycle,
but we see that the path is concentrating in a certain circular region.
\end{example}

\begin{example}
\label{Ex One Limit Cycle}Here we consider the autonomous system $\left(
\ref{Lienard System}\right)  $ discussed above with $k=3.515625$ so that
$a_{1}\simeq0.49307$, $a_{2}\simeq0.68410$, $y_{+}\left(  0\right)  $ and
$y_{-}\left(  0\right)  $ both are approximately equal to $0.652287$ and
$\bar{\alpha}\simeq0.65204$. Let $L_{1}$ be the point of minima of $F$ in
$\left(  0,a_{1}\right)  $ and $L_{2}$ be the point of maxima of $F$ in
$\left(  a_{1},a_{2}\right)  $. Here $F\left(  x\right)  $ is increasing in%
\[
\left(  -L_{2},L_{1}\right)  \cup\left(  L_{1},L_{2}\right)
\]
and decreasing in%
\[
\left(  -\infty,-L_{2}\right)  \cup\left(  -L_{1},L_{1}\right)  \cup\left(
L_{2},\infty\right)
\]
where, $L_{1}\simeq0.24997$ and $L_{2}\simeq0.60348$. In this case we obtain
only one limit cycle as shown in Figure $\ref{Ex Unique Cycle}$. Here, $F$ is
not monotone increasing throughout the interval $a_{1}<x\leq\bar{\alpha}$,
violating the condition $\left(  iv\right)  $ of Theorem $\ref{New Theorem}$
$($since $\bar{\alpha}>L_{2})$.
\end{example}

\begin{example}
\label{Ex Two Limit Cycle}We now take $k=3.5$. Here, $y_{+}\left(  0\right)  $
and $y_{-}\left(  0\right)  $ are approximately equal to $0.624499$. The
equations $\left(  \ref{Definition of alfa+}\right)  $ and $\left(
\ref{Definition of alfa-}\right)  $ both reduce to%
\[
\frac{x^{2}}{2}=\frac{1}{2}\left(  0.624499\right)  ^{2}-\frac{1}{2}\left(
-0.4x+2.5x^{3}-3.5x^{5}\right)  ^{2}%
\]
having real roots $x=\pm0.62393$ so that $\bar{\alpha}\simeq0.62393$. Here,
$a_{1}=0.4919$ and $a_{2}=0.68725$ showing that $a_{1}<\bar{\alpha}<a_{2}$.
Next, $L_{1}\simeq0.24985$, $L_{2}\simeq0.60510$. Here all the conditions of
Theorem $\ref{New Theorem}$ are satisfied except condition $\left(  iv\right)
$. However we still get two limit cycles as shown in Figure
$\ref{Ex Not Suff 2 Cycle}$ drawn in Lienard plane. This example and the above
example show that the conditions of Theorem $\ref{New Theorem}$ are sufficient
but not necessary.
\end{example}

\begin{example}
\label{Ex4}Finally we take $k=3$. Here, $y_{+}\left(  0\right)  $ and
$y_{-}\left(  0\right)  $ are approximately equal to $0.5552$ and $\bar
{\alpha}\simeq0.55324$. Here, $a_{1}=0.46473$ and $a_{2}=0.78572$ showing that
$a_{1}<\bar{\alpha}<a_{2}$. Next, $L_{1}\simeq0.24638$, $L_{2}\simeq0.66279$.
Here, all the conditions of Theorem $\ref{New Theorem}$ are satisfied and so
we get exactly two limit cycles. The phase diagram in Lienard plane is shown
in Figure $\ref{Ex 2 Cycle}$.\begin{figure}[th]
\begin{center}
\includegraphics[height=6cm]{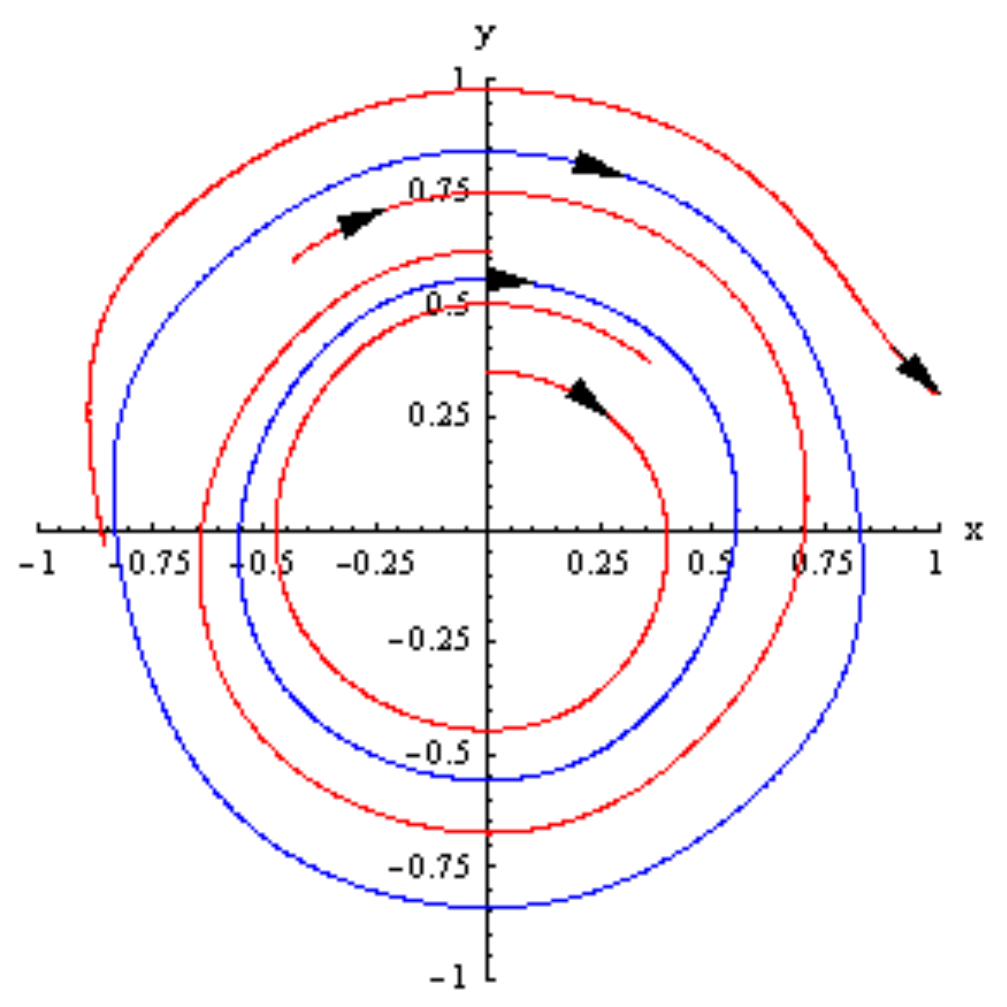}
\end{center}
\caption{{}The phase diagram of the system $\left(  \text{\ref{Lienard System}%
}\right)  $ in Lienard plane with $k=3$, and two limit cycles in Example
$\ref{Ex4}$.}%
\label{Ex 2 Cycle}%
\end{figure}
\end{example}

\begin{remark}
Although in the above examples the value of $\mu$ is sufficiently small $($so
as to satisfy the amplitude analysis of $\cite{Holst Sunburg})$ our theorem
should be applicable for large values of $\left\vert \mu\right\vert $. More
detailed bifurcation analysis in the $\left(  \mu,k\right)  $ parametric plane
will be considered separately.
\end{remark}

\begin{example}
\label{Ex Compare 2 Cycle}We now consider the function%
\[
F_{+}\left(  x\right)  =\left\{
\begin{array}
[c]{ll}%
-0.1\sin\left(  10\pi x\right)  & 0\leq x<0.15\\
0.01\sqrt{1-\left(  \dfrac{x-0.15}{0.01}\right)  ^{2}} & 0.15\leq
x<0.15+\dfrac{1}{\sqrt{101}}\\%
\begin{array}
[c]{l}%
0.02099503719021-\\
\hspace{0.5in}2\sqrt{0.1\left(  x-0.2395037190209989\right)  }%
\end{array}
& x\geq0.15+\dfrac{1}{\sqrt{101}}%
\end{array}
\right.
\]
Then we have $a_{1}=0.1$, $a_{2}=0.25052350868645645$, $L=0.15$. Here,
\[
f\left(  L\right)  =0<f\left(  0.2395037190209989\right)
=0.2006848039831627>f\left(  a_{2}\right)  =0.9526060763219791,
\]
though
\[
L<0.2395037190209989<a_{2}%
\]
showing that the function is not monotone nonincreasing in $\left[
L,a_{2}\right]  $ and so it does not satisfy the condition $\left(  3\right)
$ of Theorem $5.1$ in chapter $4$ in the book $\cite{Zhing Tongren Wenzao}$.
However for the inner limit cycle we have $y_{+}\left(  0\right)
=y_{-}\left(  0\right)  =0.12238318$. So, $\bar{\alpha}=0.12221435874426823<L$
satisfying the conditions of Theorem $\ref{New Theorem}$. This example clearly
shows that the Theorem $\ref{New Theorem}$ covers a larger class of functions
than those covered by Theorem $5.1$ in chapter $4$ in $\cite{Zhing Tongren
Wenzao}$. The function $F$ alongwith two limit cycles are shown in Figure
$\ref{Ex Compare 2 Cycle Pic}$.\begin{figure}[h]
\begin{center}
\includegraphics[height=6cm]{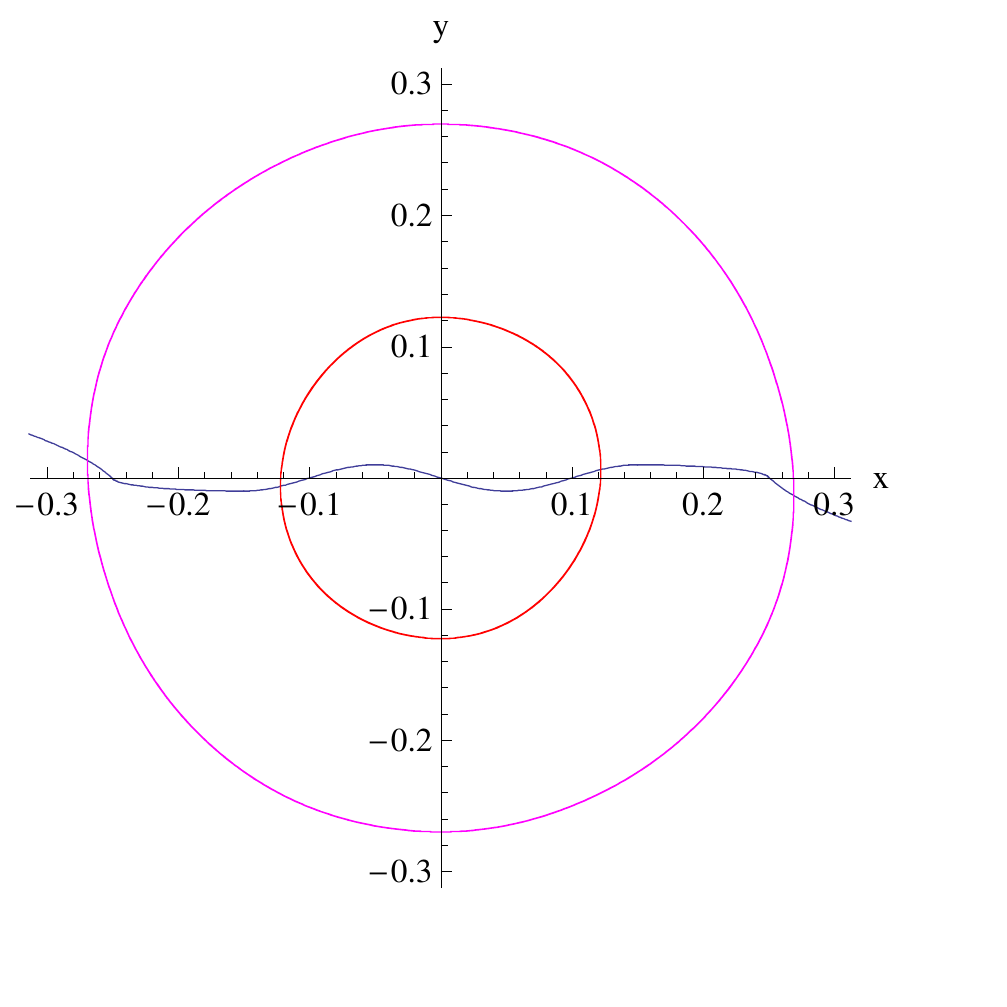}
\end{center}
\caption{{}The phase diagram of $\left(  \text{\ref{Lienard System}}\right)  $
in Lienard plane for Example \ref{Ex Compare 2 Cycle}.}%
\label{Ex Compare 2 Cycle Pic}%
\end{figure}
\end{example}

\begin{example}
\label{Ex 3 Limit Cycle}We now consider a different problem. Here we define%
\[
F_{+}\left(  x\right)  =\left\{
\begin{array}
[c]{ll}%
0.005-0.025\sqrt{1-\left(  \dfrac{x-0.048989794}{0.05}\right)  ^{2}} & 0\leq
x<a_{1}\\
-0.0008137888130718+0.01\sqrt{1-\left(  \dfrac{x-0.14781375}{0.05}\right)
^{2}} & a_{1}\leq x<a_{2}\\
0.0009168416064002765-0.015\sqrt{1-\left(  \dfrac{x-0.29746094}{0.1}\right)
^{2}} & a_{2}\leq x<a_{3}\\
-0.0003265987749816556+0.04\sqrt{x-0.3972073012751128} & x\geq a_{3}%
\end{array}
\right.
\]
where%
\begin{align*}
a_{1}  &  =0.097979588\\
a_{2}  &  =0.197647912\\
\text{and }a_{3}  &  =0.397273968.
\end{align*}
and%
\[
F\left(  x\right)  =\left\{
\begin{array}
[c]{lc}%
F_{+}\left(  x\right)  & x\geq0\\
-F_{+}\left(  -x\right)  & x<0
\end{array}
\right.
\]
The function $F_{+}\left(  x\right)  $ is obtained by matching three ellipses
and a parabola successively in the intervals $\left(  0,a_{1}\right)  ,$
$\left(  a_{1},a_{2}\right)  ,$ $\left(  a_{2},a_{3}\right)  ,$ and $\left(
a_{3},\infty\right)  $ such that%
\begin{equation}
F_{+}\left(  a_{i}+0\right)  =F_{+}\left(  a_{i}-0\right)  \text{ and }%
F_{+}^{\prime}\left(  a_{i}+0\right)  =F_{+}^{\prime}\left(  a_{i}-0\right)  ,
\label{Matching Cond}%
\end{equation}
where $a_{i}$'s are zeros of $F_{+}$. The unique extremum of $F$ in $\left(
0,a_{1}\right)  $, $\left(  a_{1},a_{2}\right)  $, $\left(  a_{2}%
,a_{3}\right)  $ are respectively%
\begin{align*}
L_{0}  &  =0.048989794\\
L_{1}  &  =0.14781375\\
\text{and }L_{2}  &  =0.29746094.
\end{align*}
We obtain three limit cycles which meet the positive $y$-axis at the points
$\left(  0,y_{1}\left(  0\right)  \right)  $, $\left(  0,y_{2}\left(
0\right)  \right)  $, $\left(  0,y_{3}\left(  0\right)  \right)  $ where%
\begin{align*}
y_{1}\left(  0\right)   &  =0.1332869\\
y_{2}\left(  0\right)   &  =0.212146685\\
\text{and }y_{3}\left(  0\right)   &  =0.4630114\text{.}%
\end{align*}
The matching conditions $\left(  \ref{Matching Cond}\right)  $ are used to
make $F\in C^{1}\left(  R\right)  $ with accuracy level $O\left(
10^{-7}\right)  $. This function is constructed in a trial and error method
and numerical data with large significant digits arise in this fashion.
Examples with lower significant digits and lower and higher accuracy are
possible in principle. Here we get $\bar{\alpha}_{1}=0.133002186$ and
$\bar{\alpha}_{2}=0.21203506657$. The function $F$ satisfies all the
conditions of Theorem $\ref{New Theorem N}$ $($for example $\bar{\alpha}%
_{i}<L_{i}$ etc.$)$ and so the existence of the above three limit cycles are
ensured by this theorem, the proof of which is presented separately
$\cite{Palit Datta 2}$. However, the function $F$ is defined in such a manner
that $\left\vert F\left(  L_{0}\right)  \right\vert >\left\vert F\left(
L_{2}\right)  \right\vert $ implying that $\beta_{2}$ mentioned in Theorem $1$
of $\cite{Chen Llibre Zhang}$ or in Theorem $7.12$, chapter $4$ of the book
$\cite{Zhing Tongren Wenzao}$ does not exist and hence these theorems are not
applicable for the corresponding Lienard system. The limit cycles of the
Lienard system in Lienard plane and the graph of the function $F$ have been
shown separately in the Figure $\ref{Ex 3 Cycle}$. To conclude, Theorem $1$ of
$\cite{Chen Llibre Zhang}$ or Theorem $7.12$, Chapter $4$ in the book
$\cite{Zhing Tongren Wenzao}$ fail to predict the existence of the exact
number of limit cycles for the above function $F\left(  x\right)
$.\begin{figure}[th]
\begin{center}
\mbox{
\subfigure[]{\includegraphics[height=6cm]{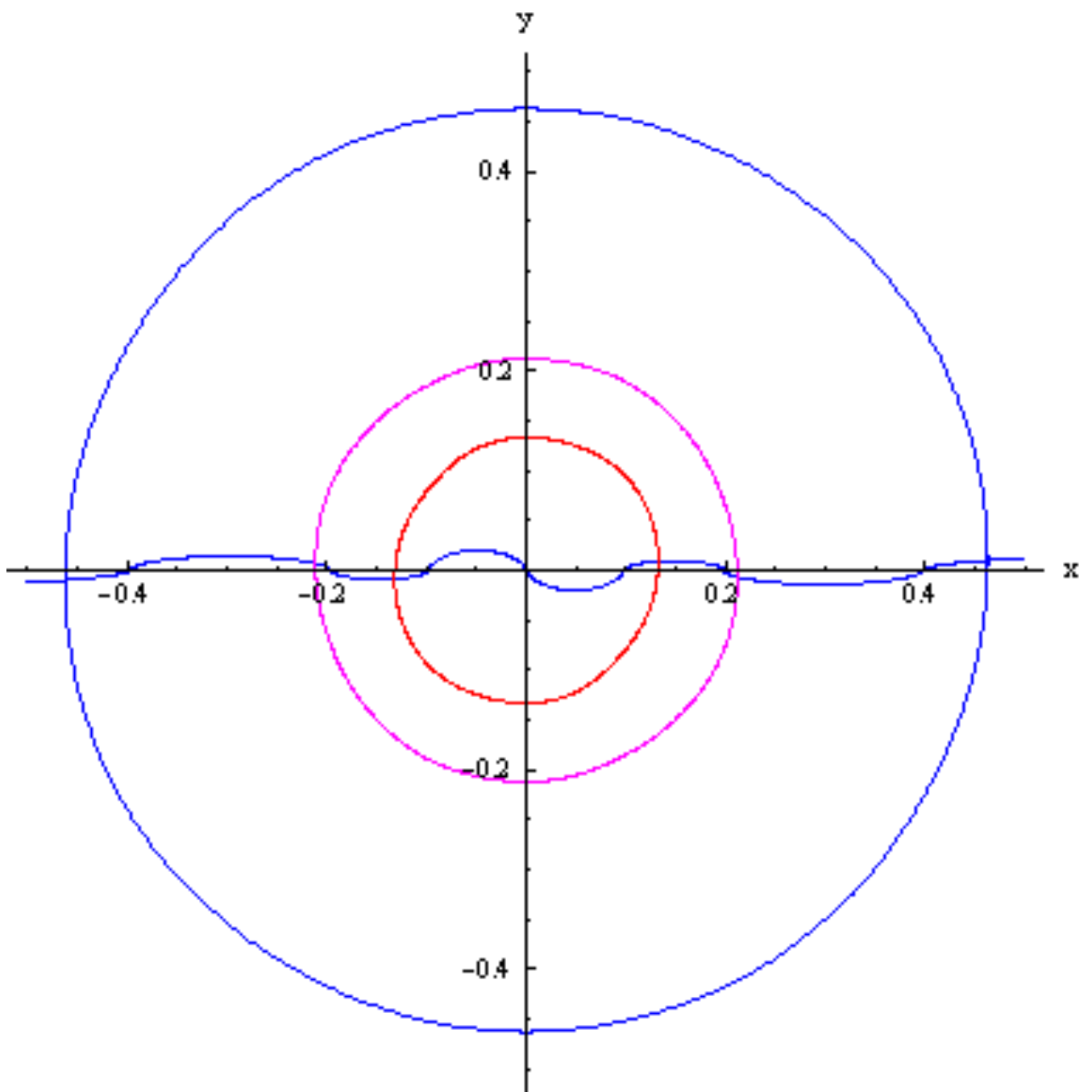}\label{Ex 3 Cycle a}}
\subfigure[]{\includegraphics[height=6cm]{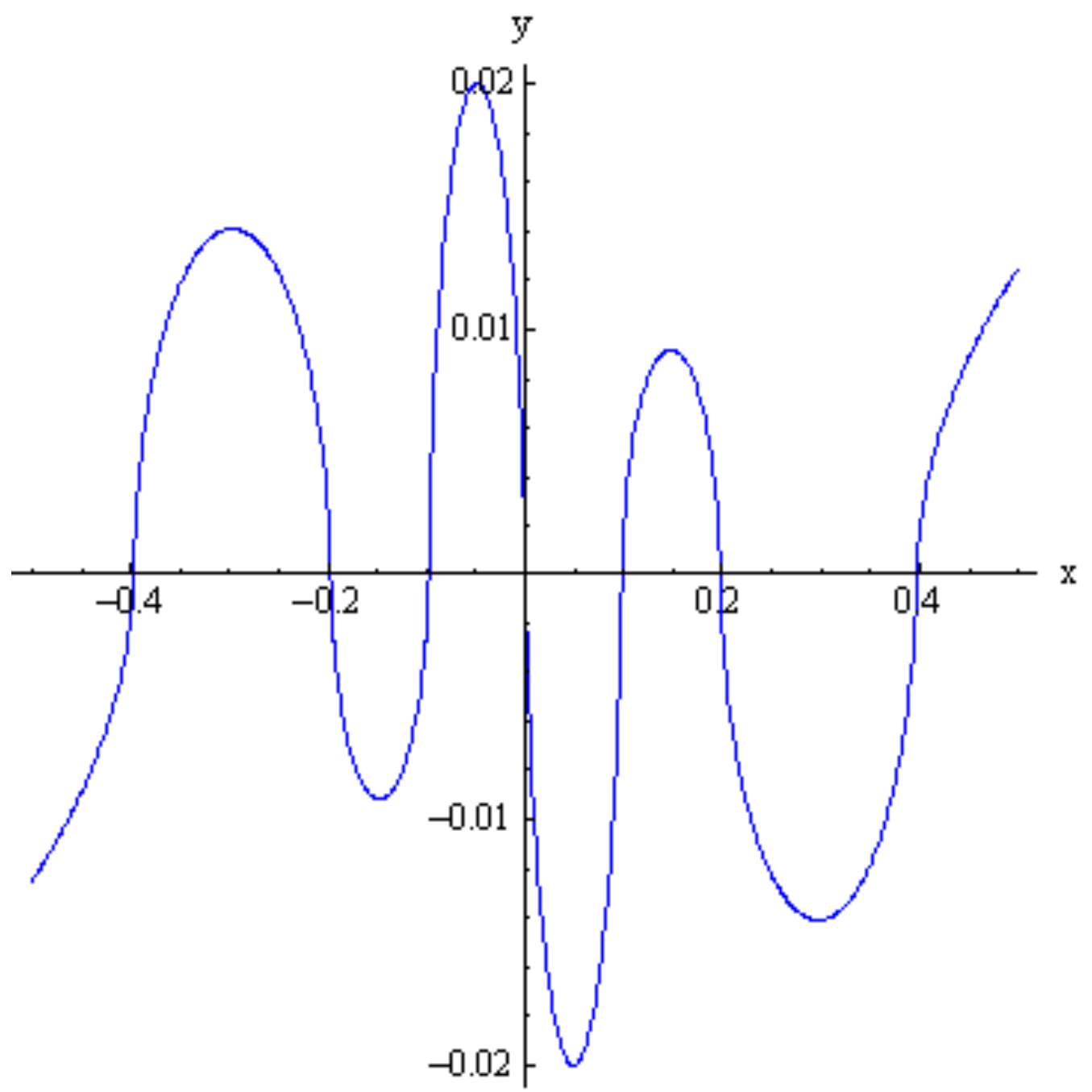}\label{Ex 3 Cycle b}}
}
\end{center}
\caption{$\left.  {}\right.  \newline\left(  a\right)  $ The phase diagram of
the system $\left(  \ref{Lienard System}\right)  $ in Lienard plane with three
limit cycles.\newline$\left(  b\right)  $ Graph of the function $F$ in Example
$\ref{Ex 3 Limit Cycle}$.}%
\label{Ex 3 Cycle}%
\end{figure}
\end{example}

\section{Concluding Remarks\label{Current Status}}

Many interesting new results have been proved on the existence of an exact
number of multiple limit cycles $\left[  \cite{Odani N},\cite{Holst
Sunburg},\cite{Chen Llibre Zhang}\right]  $ in the recent past. Odani has
proved a sufficient condition in $\cite{Odani N}$ using a choice function
$\phi_{k}$ which can be exploited to obtain better estimates of amplitudes of
the limit cycles. We used a straight forward method depending on the geometry
of phase diagram. We have proved a similar result with more general class of
functions $F\left(  x\right)  $ by a simpler method. In the present approach a
strict monotonicity of $F\left(  x\right)  $ is required only in the intervals
$a_{1}<x<\bar{\alpha}$ and $x>a_{2}$. Consequently, $F\left(  x\right)  $ can
accommodate \textquotedblleft small scale" oscillations in the interval
$\bar{\alpha}<x<a_{2}$. Odani, for instance, considered an $F\left(  x\right)
$ which is not only $C^{1}$ but also has a unique extremum in the interval
$a_{1}<x<a_{2}$. Further, the theorem is valid for a more general class of the
function $g\left(  x\right)  $. Odani's theorem however is valid only for
$g\left(  x\right)  =x$. An interesting problem will be to establish the
relation between $\bar{\alpha}$ of our approach and the function $\phi_{k}$.
We note that $\hat{\alpha}_{i}$ corresponds to the amplitude of the limit
cycles. Our estimates of amplitude of the limit cycle of the Van der pol
equation constitute an improvement over those available in the literature
$\left[  \cite{Odani},\text{ }\cite{Lopez}\right]  $. Examples
$\ref{Ex Compare 2 Cycle}$ \& $\ref{Ex 3 Limit Cycle}$, on the other hand,
show the difference between the present theorem and those of $\cite{Zhing
Tongren Wenzao}$ and $\cite{Chen Llibre Zhang}$. The calculations are accurate
upto the accuracy level $O\left(  10^{-7}\right)  $. The existence of limit
cycles in a Lienard system allowing discontinuity $($see, for instance,
$\cite{Llibre Ponce Torres})$ is an interesting problem for further study.
Determining the shape of the limit cycles is also left for future investigations.

Before closing we note that the value $\bar{\alpha}$, in general, is a
function of the parameters of $F\left(  x\right)  $ in the parametric space.
For instance, in Examples $\ref{Ex1}-\ref{Ex4}$, $\bar{\alpha}$ is a function
of the parameters $\mu$ and $k$. The study of the variation of $\bar{\alpha}$
in the parametric space seems to offer interesting insights into the
bifurcation and related issues of the multiple limit cycles in a Lienard
system. The relationship with Poincare's return map also needs to be studied.
We wish to investigate these problems in future.

\section{Appendix}

Here we present a brief outline of the proof of Theorem $\ref{New Theorem N}$
which is given in detail in $\cite{Palit Datta 2}$. The theorem is proved by
an induction method which is dependent on the \textit{non-trivial} initial
hypotheses that the the result holds for $N=1$ and $N=2$.

\begin{figure}[h]
\begin{center}
\includegraphics[height=8cm,width=10cm]{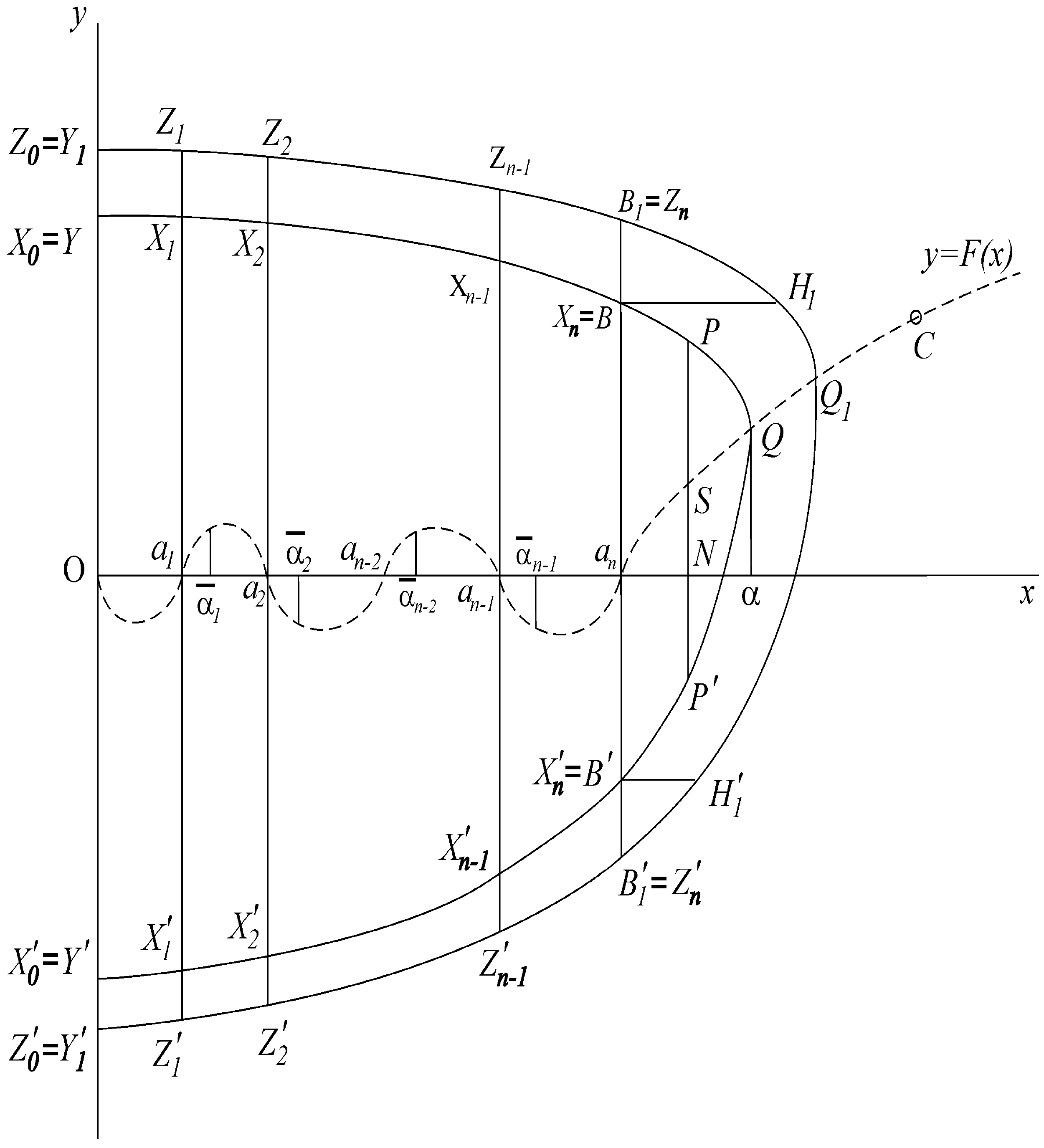}
\end{center}
\caption{{}}%
\label{Typical n Path}%
\end{figure}We shall prove the theorem by showing the result that each limit
cycle intersects the $x-$axis at a point lying in the open interval $\left(
\bar{\alpha}_{i},\bar{\alpha}_{i+1}\right]  $, $i=0,1,2,\ldots,N-1$, where
$\bar{\alpha}_{0}=L_{0}$ is the local minima of $F\left(  x\right)  $ in
$\left[  0,a_{1}\right]  $. By Lienard theorem and Theorem $\ref{New Theorem}$
it follows that the result is true for $N=1$ and $N=2$. We shall now prove the
theorem by induction. We assume that the theorem is true for $N=n-1$ and we
shall prove that it is true for $N=n$. We prove the theorem by taking $n$ as
an odd $+ve$ integer so that $\left(  n-1\right)  $ is even. The case for
which $n$ is even can similarly be proved and so is omitted. It can be shown
that $\cite{Jordan Smith}$, $V_{YQY^{\prime}}$ changes its sign from $+ve$ to
$-ve$ as $Q$ moves out of $A_{1}\left(  a_{1},0\right)  $ along the curve
$y=F\left(  x\right)  $ and hence vanishes there due to its continuity and
generates the first limit cycle around the origin. Next, in Theorem
$\ref{New Theorem}$ we see $V_{YQY^{\prime}}$ again changes its sign from
$-ve$ to $+ve$ and generates the second limit cycle around the first. Also, we
see that for existence of second limit cycle we need the existence of the
point $\bar{\alpha}$, which we denote here as $\bar{\alpha}_{1}$.

Since by induction hypothesis the theorem is true for $N=n-1$, so it follows
that in each and every interval $\left(  \bar{\alpha}_{k},\bar{\alpha}%
_{k+1}\right]  $, $k=0,1,2,\ldots,n-2$ the system $\left(
\ref{Lienard System}\right)  $ has a limit cycle and the outermost limit cycle
cuts the $x-$ axis somewhere in $\left(  \bar{\alpha}_{n-1},\infty\right)  $.
Also $V_{YQY^{\prime}}$ changes its sign alternately as the point $Q$ moves
out of $a_{i}$'s, $i=1,2,\ldots,n-1$. Since $\left(  n-1\right)  $ is even, it
follows that $V_{YQY^{\prime}}$ changes its sign from $+ve$ to $-ve$ as $Q$
moves out of $a_{n-2}$ along the curve $y=F\left(  x\right)  $. Since there is
only one limit cycle in the region $\left(  \bar{\alpha}_{n-1},\infty\right)
$, so it is clear that $V_{YQY^{\prime}}$ must change its sign from $-ve$ to
$+ve$ once and only once as $Q$ moves out of $A_{n-1}\left(  a_{n-1},0\right)
$ along the curve $y=F\left(  x\right)  $. Also it follows that once
$V_{YQY^{\prime}}$ becomes $+ve$, it can not vanish further, otherwise we
would get one more limit cycle, contradicting the hypothesis so that total
number of limit cycle becomes $n$. We now try to find an estimate of $\alpha$
for which $V_{YQY^{\prime}}$ vanishes for the last time.

We shall now prove that the result is true for $N=n$ and so we assume that all
the hypotheses or conditions of this theorem are true for $N=n$. So, we get
one more point $\bar{\alpha}_{n}$ and another root $a_{n}$, ensuring the fact
that $V_{YQY^{\prime}}$ vanishes as $Q$ moves out of $A_{n-1}$ through the
curve $y=F\left(  x\right)  $, thus accommodating a unique limit cycle in the
interval $\left(  \bar{\alpha}_{n-1},\bar{\alpha}_{n}\right]  $.

By the result discussed so far it follows that $V_{YQY^{\prime}}>0$ when
$\alpha$ lies in certain suitable small right neighbourhood of $\bar{\alpha
}_{n-1}$. We shall prove that $V_{YQY^{\prime}}$ ultimately becomes $-ve$ and
remains $-ve$ as $Q$ moves out of $A_{n}\left(  a_{n},0\right)  $ along the
curve $y=F\left(  x\right)  $ generating the unique limit cycle and hence
proving the required result for $N=n$.

We draw straight line segments $X_{k}X_{k}^{\prime}$, $k=1,2,3,\ldots,n$,
passing through $A_{k}$ and parallel to $y$-axis as shown in Figure
$\ref{Typical n Path}$. For convenience, we shall call the points $X_{n}%
,X_{n}^{\prime},Y,Y^{\prime}$ as $B,B^{\prime},X_{0},X_{0}^{\prime}$
respectively. We write the curves\newline$\left.  {}\right.  $\hfill
$\Gamma_{k}=X_{k-1}X_{k},\quad\Gamma_{k}^{\prime}=X_{k}^{\prime}%
X_{k-1}^{\prime},\quad k=1,2,3,\ldots,n$\hfill$\left.  {}\right.  $\newline so
that\newline$\left.  {}\right.  $\hfill$YQY^{\prime}=X_{0}QX_{0}^{\prime}=%
{\textstyle\sum\limits_{k=1}^{n}}
\Gamma_{k}+X_{n}QX_{n}^{\prime}+%
{\textstyle\sum\limits_{k=1}^{n}}
\Gamma_{k}^{\prime}=%
{\textstyle\sum\limits_{k=1}^{n}}
\left(  \Gamma_{k}+\Gamma_{k}^{\prime}\right)  +BQB^{\prime}$\hfill$\left.
{}\right.  $\newline and%
\begin{equation}
V_{YQY^{\prime}}=%
{\textstyle\sum\limits_{k=1}^{n}}
\left(  V_{\Gamma_{k}}+V_{\Gamma_{k}^{\prime}}\right)  +V_{BQB^{\prime}%
}\text{.} \label{Potential Sum}%
\end{equation}
which is used in place of the function in $\left(
\ref{Potential Decomposition}\right)  $. The rest of the proof are analogous
to that of Theorem $\ref{New Theorem}$ and proved separately in $\cite{Palit
Datta 2}$.

\end{document}